\documentclass[preprint,11pt,authoryear]{elsarticle}

\usepackage[utf8]{inputenc}
\usepackage[english]{babel}
\usepackage{amsmath, amssymb, amsfonts, amsthm}
\usepackage{thmtools}
\usepackage{thm-restate}
\usepackage{mathtools}
\usepackage{booktabs}
\usepackage{algorithm}
\usepackage{algpseudocode}
\usepackage[margin=1in]{geometry}
\usepackage{enumitem}
\usepackage{bm}
\usepackage{natbib}
\usepackage{setspace}
\usepackage{graphicx}   % For including images
\usepackage{subcaption} % For the subfigure environment
\usepackage{xcolor}
\usepackage{url}

\usepackage[normalem]{ulem}

\newcommand{\bvec}[1]{\mathbf{#1}}
\newtheorem{lemma}{Lemma}

\theoremstyle{remark}
\newtheorem{remark}{Remark}

\journal{European Journal of Operational Research}

\begin{document}

    \begin{frontmatter}
        \title{Dynamic Driver Allocation Under Latent Demand Regimes: Indexability of a Partially Observed Markov Decision Process}
         
        \author[csu]{Pedro Cesar Lopes Gerum\corref{cor1}}
        \ead{p.lopesgerum@csuohio.edu}
        \author[csu]{Jiong Liu}
        \author[ufms]{Ellen Bernal Cavalheiro}
        \author[csu]{Luiz Felipe Martins}
        \author[ind]{Matteo Giaretti}

        \cortext[cor1]{Corresponding author}
        \affiliation[csu]{Cleveland State University}  \affiliation[ufms]{Federal University of Mato Grosso do Sul}
        \affiliation[ind]{Independent Researcher}
        
\begin{abstract}

Quick-commerce dark stores dispatch e-grocery orders within 15 to 30 minutes, so operators such as Getir, Glovo, and GoPuff must commit drivers before orders arrive. Demand follows a latent regime that persists across hours, while unfulfilled orders spill forward as a compounding backlog. This decision binds whether drivers are employed on fixed shifts or drawn from a gig platform whose incentives are set ahead of the hour, yet existing models do not learn the regime as orders arrive. We formulate the single-store problem as a partially observable Markov decision process in which the firm infers the regime from realized orders. We show that optimal staffing rises with backlog and prove the single-store problem is indexable, a property open for multi-action partially observed problems in general. We then extend the framework to a driver pool shared across stores through a Lagrangian relaxation that decouples the network into per-store subproblems. The result is a two-level allocation policy that prices the value of tracking demand in real time and ranks stores by a provably valid priority index. On 2021 to 2022 data from the European firm SuperGlovo, which staffs full-time drivers to comply with Spain's Rider's Law, belief updating adds 10.9\%, about \$188K across 27 stores, over the same program with the belief held fixed. The gain concentrates in the stores whose regimes are most persistent, observable before deployment. Online allocation reduces to a table lookup and a ranked list with a cutoff price.
\end{abstract}

        \begin{keyword}
            Grocery Delivery \sep Demand Learning \sep Stochastic Processes \sep POMDP \sep Restless Bandits
        \end{keyword}
        
    \end{frontmatter}

\section{Introduction}

Early entrants in online grocery delivery, such as Webvan and Peapod, invested heavily in dedicated warehousing and fleet infrastructure during the late 1990s, yet neither could generate sufficient order density to cover the fixed costs, and both eventually failed or sharply downsized. The COVID-19 pandemic sharpened expectations around speed and spurred a new fulfillment model, the purpose-designed micro-fulfillment center known as the dark store. These revisit the dedicated-facility idea of the early entrants, but on a smaller and more focused scale. They carry a curated assortment of 1,500 to 3,000 SKUs with real-time inventory visible through a mobile application, so customers only order items that are in stock. Their compact footprint keeps fixed costs manageable, and the urban order density enabled by smartphone adoption provides the volume that earlier warehouse models could not reach. 

The result is reliable delivery within 15 to 30 minutes, and a global market projected to reach \$2,159 billion by 2030 at a compound annual growth rate of 25\% \citep{globenewswire2022}. Operators such as Getir and Glovo in Europe, Freshippo (Hema) in China, and GoPuff in North America have shown that the model scales well across diverse urban markets. Yet the speed advantage that distinguishes dark stores from conventional grocery delivery creates a staffing challenge. For instance, a store's order volume is highly sensitive to transient local conditions. On a rainy evening, or during a nearby sporting event, demand can surge well above its usual level for the hours the surge lasts, and such conditions measurably move store and online demand \citep{SteinkerHobergThonemann2017, BadorfHoberg2020}. Each operating hour, a decision-maker must allocate drivers for the coming period before knowing how many orders will materialize and tight delivery windows leave almost no buffer for demand surges. Allocating too many drivers wastes wages during slow periods, while allocating too few leads to delayed or missed deliveries, eroding the fast-delivery value proposition on which the model depends.

This capacity is determined before demand is known and cannot be revised within the hour. Operators such as Getir, Flink, and Gorillas staff their dark stores with employed riders on fixed shifts \citep{handelsblatt_qcommerce_labor}, and a widening body of regulation is making employment the default. This includes  France's 2020 reclassification of platform drivers, Spain's 2021 ``Rider's Law'', and the European Union's 2024 Platform Work Directive, which presumes an employment relationship and must enter national law across the Union by December 2026 \citep{eu_platform_work_directive_2024}. Glovo, for instance, moved its dark-store grocery couriers onto an employed full-time contracts after fines exceeding 79 million euros for treating more than 10,000 of them as self-employed \citep{bhrrc_glovo_employees}. The same timing binds a gig operator, which commits its wage or per-order incentive ahead of the period, since couriers cannot be summoned the instant a surge begins. Current practice and existing models typically treat this as a repeated single-period optimization, deciding each hour in isolation. However, this approach ignores two features of real demand.

First, demand exhibits within-day persistence. A store experiencing above-average order volume in one hour is more likely to remain in an elevated state the next hour, because the underlying cause (a local event, weather shift, or promotional activity) tends to persist across multiple hours. Observed demand therefore carries information about future demand that a static model discards. The active state is not announced in advance. A scheduled match or holiday could in principle be written into a calendar, but its magnitude at any given store is uncertain, and many drivers of a surge reveal themselves only as orders arrive. The firm must therefore infer the prevailing regime from realized demand in real time.

Second, when delivery capacity falls short of demand, unfulfilled orders accumulate as backlog, creating compounding service degradation in subsequent hours, as the tight delivery promise stretches under load and late orders cascade forward \citep{medianama_10min_rollback}. Customers rank on-time delivery above outright speed, and a single late order measurably raises the chance they abandon the retailer \citep{mckinsey_ecommerce_delivery}.

Moreover, several dark stores within a city sit minutes apart and draw on one shared roster of drivers. Reassigning a driver between them is a short repositioning rather than a long move  \citep{AfecheLiuMaglaras2023}, and thus allocating an additional driver to one store reduces availability for another. When demand varies across stores in real time, the platform must decide each hour which stores most need the shared capacity. This turns a set of independent single-store decisions into a coupled resource-allocation problem, and it calls for a priority rule on how to divide the pool.

We develop a two-level framework for quick-commerce dynamic driver allocation. At the single-store level, we formulate a Partially Observable Markov Decision Process (POMDP) in which the active demand regime is latent and must be inferred from observed orders. We establish that the optimal policy increases monotonically with backlog, and prove indexability of the single-store problem. At the multi-store level, we introduce a Lagrangian relaxation that decouples the network into independent per-store subproblems, each penalized by a shadow price on driver usage. Indexability guarantees that the shadow price defines a well-ordered priority index for each marginal driver.

We make two contributions. The first is a single-store POMDP for quick-commerce driver allocation, in which the demand regime is latent and the firm learns it from realized orders. Unfulfilled orders carry forward as backlog, so a single-period rule that ignores carryover understaffs when demand is high and lets the shortfall compound. The firm also holds a belief over the active demand regime and updates it each hour as demand arrives. Across 27 stores, updating this belief in real time adds 10.9\% ($\approx$ \$188K), over the same dynamic program with the belief held fixed. The gain is not spread evenly, reaching 49\% of a store's reward in those where the regimes are most persistent. Because persistence is visible before deployment, a firm can tell in advance which stores are worth tracking. 

The second contribution is a proof that the single-store problem is \emph{indexable}. Indexability is open for general multi-action partially observed problems. We establish it here for the subclass in which the demand regime evolves independently of the action, the natural structure when staffing serves demand but does not create it. As the shadow price on a driver rises, we show that the optimal number of drivers never rises, so each marginal driver has a well-defined priority index. We further show that this index is non-decreasing in the store’s backlog, so congestion raises a store’s claim on the shared pool. The greedy rule that pools the drivers and fills from the top of the ranking down then coincides with the optimal solution of the Lagrangian relaxation of the network problem. The inference runs offline, so at runtime the policy is a table lookup for a single store and a ranked list with a cutoff price for the shared pool.

The remainder of the paper is organized as follows. Section~\ref{sec:literature} reviews related literature. Section~\ref{sec:problem} describes the operational setting and data. Section~\ref{sec:pomdp} formulates the single-store POMDP.  Section~\ref{sec:demand_estimation} presents the demand regime estimation pipeline. Section~\ref{sec:single_store_results} evaluates the single-store policy against benchmarks and examines the structural properties of the optimal policy. Section~\ref{sec:multistore} extends the framework to multi-store driver allocation via Lagrangian relaxation and index policies, and illustrates the resulting policy on a shared fleet. Section~\ref{sec:conclusions} concludes.

%%=========================================================================
\section{Literature Review}
\label{sec:literature}
%%=========================================================================

We draw on four streams of prior work, on-demand and last-mile delivery operations, demand learning under partial observability, restless bandits and index policies for weakly coupled systems, and staffing under uncertainty. We position our two contributions against the last three.

On-demand and last-mile delivery has been studied primarily as dynamic dispatch and routing. A large body of work models the operator's decisions as a Markov decision process with a fully observed state. These include dynamic same-day delivery with preemptive depot returns \citep{UlmerThomasMattfeld2019}, the restaurant meal delivery problem with random ready times \citep{UlmerThomasCampbellWoyak2021}, order dispatch under multiple sources of uncertainty \citep{LIANG2026100240}, and crowdsourced pickup-and-delivery with ad hoc drivers \citep{ArslanAgatzKroonZuidwijk2019}. A parallel line studies fleet sizing and crowdsourced or hybrid supply, including committed and ad hoc fleets \citep{BehrendtBehrendtWang2024}, service and capacity planning with self-scheduling couriers \citep{YildizSavelsbergh2019}, workforce planning in O2O systems \citep{DaiLiu2020}, and arrival-time estimation for occasional drivers \citep{ZehtabianLarsenWohlk2022}. \citet{SavelsberghUlmer2024} survey the field. The economics of self-scheduling platform labor and capacity is studied by \citet{CachonDanielsLobel2017} and \citet{Taylor2018}. Across this work, the demand state is observed or forecast from history, and the dispatch or staffing decision does not learn a latent demand regime online. We are not aware of any prior operations study of quick-commerce dark-store staffing specifically, where curated assortments and tight 15-to-30-minute promises make the hourly commit-before-demand decision distinctive.

Our first contribution, learning a latent demand regime from realized orders, sits in the literature on operations under imperfectly observed demand. Bayesian demand learning dates to \citet{Scarf1959} and \citet{Azoury1985}, and \citet{LariviereePorteus1999} study Bayesian updating when lost sales go unobserved. When demand is driven by a modulating Markov state, the relevant policies are state-dependent. \citet{SongZipkin1993} characterize optimal base-stock policies under Markov-modulated demand when the state is observed, and \citet{treharne2002adaptive} treat the case where the state is hidden, and the firm carries a belief over it. \citet{ArifogluOzekici2010} derive optimal policies for a finite-capacity inventory system under partially observed Markov-modulated demand and supply. This setting is a POMDP, whose value function is piecewise linear and convex in the belief \citep{sondik1978optimal} and admits structured, often monotone, optimal policies \citep{Lovejoy1987}. We follow this lineage but change the decision. Rather than ordering a single product, the firm staffs a fleet under a backlog that compounds, and our structural results characterize how optimal staffing rises with backlog.

Our second contribution, allocating a shared driver pool by a priority index, builds on restless multi-armed bandits. \citet{whittle1988restless} introduced Whittle's indices, and \citet{weber1990index} established its asymptotic optimality in the binary-action case, while optimal control of the unrelaxed problem is intractable in general \citep{PapadimitriouTsitsiklis1999}. Indexability, the property that makes the index well defined, is delicate and must be verified problem by problem \citep{NinoMora2001}, and is harder still with multiple actions \citep{HodgeGlazebrook2015} or partial observation. Recent work establishes indexability for partially observed scheduling problems \citep{Wang2026} and applies restless-bandit index policies to on-demand resource allocation \citep{Fu2025}. The standard route to a scalable policy relaxes the coupling constraint with a Lagrange multiplier and decomposes the network into per-arm subproblems \citep{Hawkins2003}. Closest to our setting, \citet{MeshramKaza2025} study multi-action partially observed restless bandits and obtain Lagrangian bounds and heuristic policies, noting that index policies face inherent limitations once multiple actions and latent states are present, and indexability for this class remains open in general. In their setting the action also drives the latent state. In ours the demand regime evolves independently of the staffing decision, and this separation is what makes an exact proof possible. \citet{Liu2025relaxed} obtains near-optimal index policies for partially observable restless bandits through a relaxed notion of indexability, whereas we prove exact indexability for our subclass. We prove it for for the single-store problem directly, so the resulting index policy is the exact optimum of the Lagrangian relaxation.

Committing capacity before uncertain demand is a classical concern in service staffing \citep{GansKooleMandelbaum2003} and in the newsvendor problem. We bring these threads together for quick commerce. We model an operator that infers a persistent latent demand regime online to staff a single dark store under compounding backlog, and that allocates a shared fleet across stores by a provably valid priority index. To our knowledge, this is the first treatment to combine online demand-regime learning with a proven indexable allocation policy in this setting.

\section{Problem Description}
\label{sec:problem}
%%=========================================================================

Each operating hour, a decision-maker must decide how many drivers to allocate before customer orders materialize. The decision is made under three interacting sources of uncertainty. First, hourly demand is stochastic and exhibits within-day persistence, i.e., an hour of above-average orders signals that the underlying demand regime is likely elevated. Second, unfulfilled orders spill over as backlog into subsequent hours, so a staffing shortfall compounds over time. Third, the latent demand regime is not directly observable, and the decision-maker must infer it from realized orders through Bayesian updating. In a network of stores sharing a common driver pool, the allocation problem is further complicated by a coupling constraint. Drivers assigned to one store are unavailable to others. We formalize these dynamics as a POMDP at the single-store level and extend it to multi-store allocation via Lagrangian relaxation and index policies. The methodology used is illustrated in Figure \ref{figure:Flowchart_POMDP}.

%\begin{figure}[htbp]
%    \centering
%    \includegraphics[width=0.9\textwidth]{figures/flowchart_POMDP.pdf}
%    \caption{Flowchart POMDP.}
%    \label{figure:Flowchart_POMDP}
%\end{figure}

\begin{figure}[htbp]
    \centering
    \includegraphics[width=0.9\textwidth]{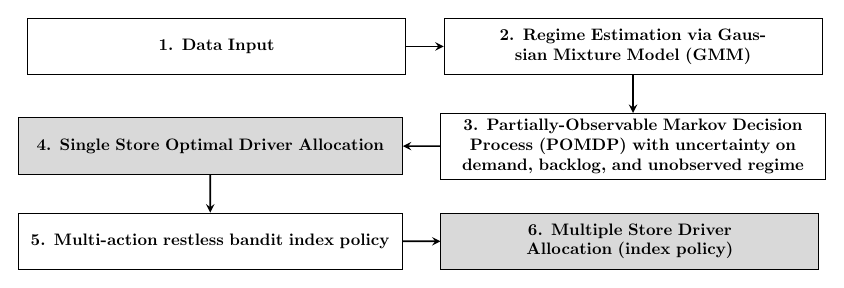}
    \caption{Driver Allocation Flowchart.}
    \label{figure:Flowchart_POMDP}
\end{figure}

%%=========================================================================
\section{Single-Store Formulation}
\label{sec:pomdp}
%%=========================================================================

We model the single-store driver allocation problem as a finite-horizon POMDP. The observable state for each period consists of the backlog $s_t$ (unfulfilled orders carried from previous periods) and the calendar context $(d_t, h_t)$ (day of week and hour). The hidden state is the active demand regime, which the decision-maker cannot observe directly. A regime is a persistent latent state that indexes demand intensity: intuitively, whether the store is experiencing a ``busy'' or ``slow'' period. Each regime $k \in \{1,\ldots,K\}$ is governed by a transition matrix $\bvec{T}$ and determines the distribution of incoming orders. Conditional on regime $k$ being active, the number of orders $x_t$ in period $t$ is drawn from a time-specific distribution $f_k(x;\,d,h)$, called the \emph{emission distribution} (by analogy with hidden Markov models). We treat the emission $f_k$ as a model primitive here and defer its estimation to Section \ref{sec:estimation_overview}.

Because this state is hidden, the decision-maker maintains a $K$-dimensional belief vector $\bvec{b}_t \in \Delta^{K-1}$ (a probability distribution over the $K$ regimes), updated each period via Bayesian inference after observing realized demand $x_t$. The action $a_t \in \{a_{\min}, \ldots, a_{\max}\}$ is the number of drivers to allocate, chosen before demand realizes. The reward captures revenue from fulfilled orders net of driver wages, picking costs, and backlog costs. Transitions govern both the deterministic backlog dynamics and the stochastic belief evolution.

\subsection{Reward}

An order produces revenue $p$ upon fulfillment. Each driver costs $c_a$ per hour in wages and can fulfill up to $v$ orders per hour, so a fleet of $a_t$ drivers can serve at most $v a_t$ orders. Demand exceeding $v a_t$ spills into the next period as backlog. Each fulfilled order also incurs a picking cost $c_p$, reflecting picker labor. Backlogged orders from prior periods are penalized at $c_b$ per order per period, reflecting customer dissatisfaction. We assume positive margins ($v p - v c_p - c_a > 0$). Table \ref{tab:reward_params} summarizes the parameters.

Given effective demand $e_t = x_t + s_t$, observed demand $x_t$, backlog $s_t$, and action $a_t$, the single-period reward is:
\begin{align}
    r(x_t, s_t, a_t) &= \underbrace{p \min\{e_t,\, va_t\}}_{\text{revenue}}
                       - \underbrace{c_a \, a_t}_{\text{driver wages}}
                       - \underbrace{c_p \min\{e_t,\, va_t\}}_{\text{picking cost}}
           - \underbrace{c_b \cdot s_t}_{\text{backlog penalty}},
    \label{eq:reward_spillover}
\end{align}

\begin{table}[h]
\centering\small
\caption{Reward function parameters.}
\label{tab:reward_params}
\begin{tabular}{@{}ll@{}}
\toprule
\textbf{Symbol} & \textbf{Description} \\
\midrule
$p$ & Revenue per fulfilled order \\
$c_a$ & Driver wage per hour \\
$c_p$ & Picking cost per fulfilled order \\
$c_b$ & Backlog penalty per carried-over order per period \\
$c_{\text{lost}}$ & Backlog penalty at the last period \\
$v$ & Maximum orders per driver per hour (capacity) \\
\bottomrule
\end{tabular}
\end{table}

All cost parameters are strictly positive ($c_a, c_p, c_b, c_{\text{lost}}, p > 0$), per-driver capacity satisfies $v \in \mathbb{Z}_{>0}$, demand is integer-valued ($x_t \in \mathbb{Z}_{\ge 0}$). For tractability, we assume the action set is the integer interval $\mathcal{A} = \{a_{\min}, a_{\min}+1, \ldots, a_{\max}\}$. We set $a_{\max}$ large enough that the constraint $a \le a_{\max}$ is non-binding at any backlog level the analysis visits.

Finally, we assume $c_b > q$, where $q \coloneqq p -c_p$ is the net per-order margin, so a carried-over order costs more than the net margin a fulfilled order would generate, because the failure erodes the customer relationship beyond a single transaction. The system is then reactive, with the optimal allocation rising as backlog accumulates.

At the close of the operating day, leftover backlog is not deferred to the next day's operations. Orders unfulfilled by the end of the final period are typically treated as cancellations or customer abandonments rather than rolled forward. We capture this through a terminal lost-sale cost $c_{\mathrm{lost}} > c_b$ per unit of leftover backlog, applied once at the end of the horizon. The asymmetry $c_{\mathrm{lost}} > c_b$ separates the within-day cost of deferring an order one additional period (priced at $c_b$) from the cost of a permanent service failure and rules out terminal-hoarding strategies that would defer backlog clearing to the close of the horizon.

\subsection{Transitions}

The backlog evolves deterministically given realized demand and action:
\begin{equation}
    s_{t+1} = \max\{x_t + s_t - v \, a_t, \; 0\} = \max\{e_t - v \, a_t, \; 0\}.
    \label{eq:backlog_transition}
\end{equation}

The demand, however, is stochastic, and the distribution is unknown (although the set of possible regimes and their corresponding emission distributions is known). All that is observed is the one instance of demand for the period. Therefore, at the start of period $t$, the firm holds belief $\bvec{b}_t$, a prior over which demand regime is currently active. The firm chooses action $a_t$, then demand $x_t$ realizes at calendar time $(d_t, h_t)$. After observing $x_t$, the belief is updated in two steps: \emph{correct} (incorporate the observation) then \emph{predict} (project forward to the next period). This yields $\bvec{b}_{t+1}$, the belief for the decision in period $t+1$.

\begin{center}
\textbf{Decide} $a_t$
      $\;\to\;$ \textbf{Observe} $x_t$
      $\;\to\;$ \textbf{Correct $\bvec{b}_t$}
      $\;\to\;$ \textbf{Predict $\bvec{b}_{t+1}$}
      $\;\to\;$ (period $t+1$)
\end{center}

In the \emph{correct} step, after observing demand $x_t$ at calendar time $(d_t, h_t)$, update via Bayes' rule:
\begin{equation}
    b_t^+(j) = \frac{b_t(j) \cdot f_j(x_t;\, d_t, h_t)}
                    {\displaystyle\sum_{k=1}^{K} b_t(k) \cdot f_k(x_t;\, d_t, h_t)},
    \quad j = 1, \ldots, K,
    \label{eq:correct}
\end{equation}
where $f_j(\cdot;\, d_t, h_t)$ is the emission for regime $j$ at the current calendar time. The posterior $\bvec{b}_t^+$ reflects the information from the observed demand.

The \emph{predict} step projects the posterior forward using the transition matrix to obtain the prior for period $t+1$, $\bvec{b}_{t+1} = \bvec{T}^\top \bvec{b}_t^+$. Combining both steps:
\begin{equation}
    \bvec{b}_{t+1} = \bvec{T}^\top \cdot \mathrm{Correct}(\bvec{b}_t,\, x_t;\, d_t, h_t),
    \label{eq:belief_combined}
\end{equation}
where $\mathrm{Correct}(\bvec{b}_t, x_t;\, d_t, h_t)$ denotes the posterior $\bvec{b}_t^+$ from Equation \eqref{eq:correct}.

The correct step reweights each regime by how plausibly it produced the observed demand, so a low $x_t$ relative to what was expected favors low-demand regimes. The predict step then advances the posterior one period through $\bvec{T}$, so regimes with a large diagonal persist while large off-diagonals spread mass. One modeling assumption here is that the transition matrix $\bvec{T}$ does not depend on the action $a_t$. Demand regimes evolve according to their own dynamics regardless of how many drivers are deployed, thus the action affects the reward and the backlog but not the latent state. This separability allows the belief to be updated from observables alone and is standard in POMDP models of demand uncertainty \citep{treharne2002adaptive}.

\subsection{Value Function Formulation}
\label{sec:bellman}

Since the observation model is store-specific, the value function and optimal policy are computed per store. We adopt a finite-horizon formulation in which each operating day constitutes an independent episode, with beliefs reset to the stationary distribution $\bvec{\pi}$ at the start of each day. Within-day persistence is strong (adjacent hours are highly correlated), whereas overnight transitions are weak, because the shocks driving intra-day regime dynamics (local events, weather, promotions) are largely resolved before the next operating day. The finite-horizon structure avoids a discount factor and aligns the planning cycle with the natural daily shift structure of dark-store operations.

Each operating day comprises $T$ periods (the number of operating hours), so the value function carries a time index. Since the Bellman equation depends on the day of week $d$ through the calendar baselines in the emission model, a separate backward induction is solved for each $d = 1, \ldots, 7$, yielding seven time-indexed policies per store. We make this dependence explicit by writing $V_t^d$. At period $t$ with calendar context $(d, h_t)$, the value function $V_t^d(s_t, \bvec{b}_t)$ satisfies
\begin{equation}
    V_t^d(s_t, \bvec{b}_t) = \max_{a_t} \left\{
        \sum_{k=1}^{K} b_t(k)
        \sum_{x=0}^{x_{\max}}
        \Bigl[ r(x, s_t, a_t) + V_{t+1}^d\!\bigl(s_{t+1}(x),\; \bvec{b}_{t+1}(x)\bigr) \Bigr]
        f_k(x;\, d, h_t)
    \right\},
    \label{eq:bellman}
\end{equation}
with terminal condition $V_{T+1}^d(s, \bvec{b}) = -c_{\mathrm{lost}} \cdot s$ (penalizing leftover backlog at close). Here $r(x, s_t, a_t)$ is the reward from Equation \eqref{eq:reward_spillover}. $s_{t+1}(x) = \max\{x + s_t - v \, a_t,\; 0\}$ is the next-period backlog (Equation \ref{eq:backlog_transition}), $f_k(x;\, d, h_t)$ is the emission probability mass at the current calendar time, $x_{\max}$ is a truncation point set high in the upper tail of the highest-regime emission, so that the omitted probability mass is negligible, and $\bvec{b}_{t+1}(x)$ is the belief at the start of period $t+1$, obtained by correcting $\bvec{b}_t$ with observation $x$ and predicting forward (Equation \ref{eq:belief_combined}).

Although Equation \eqref{eq:bellman} is written as a discrete sum over $x \in \{0, 1, \ldots, x_{\max}\}$, the expectation is evaluated via Monte Carlo sampling. For each regime $k$, demand realizations are drawn from the continuous emission $f_k(\cdot;\, d, h_t)$ and rounded to the nearest integer.

The belief $\bvec{b}_t$ enters Equation \eqref{eq:bellman} linearly since, for a fixed action and a fixed selection of continuation $\alpha$-vector at each observation, the right-hand side is affine in $\bvec{b}_t$, because the normalizing constant of the belief update cancels against the outer observation likelihood. This is the piecewise-linear-convex (PWLC) structure of POMDP value functions \citep{sondik1978optimal}, in which $V_t^d$ is the pointwise maximum over a finite set of $\alpha$-vectors (hyperplanes over the belief simplex), convex and piecewise linear in $\bvec{b}_t$. The PWLC structure guarantees that the value function is in principle exactly representable over the belief simplex, but in practice, the number of $\alpha$-vectors grows exponentially with the horizon. Our computational approach instead discretizes the belief simplex directly (Section \ref{sec:algorithm}), trading exact representation for tractability while preserving the property that the optimal policy is computed by backward induction on a finite grid.

\subsection{Structural Properties of the POMDP}
\label{sec:structural}

With the recursion in place, we now ask how its optimal policy behaves, before turning to how it is computed. Two questions decide whether that policy matches operational intuition. Does staffing rise as backlog accumulates rather than fall, and does it rise gradually rather than swing from hour to hour? A policy that staffed down as the queue grew would mean the backlog and terminal penalties are miscalibrated, and one that moved the roster sharply on small changes in backlog would be hard to run in practice.

To answer these questions, we characterize the value function and optimal policy. The results below use four standing conditions stated above as part of the model: (A1) the net per-order margin $q \coloneqq p - c_p > 0$; (A2) the backlog penalty dominates the margin, $c_b > q$; (A3) the regime transition matrix is action-independent; and (A4) the action upper bound $a_{\max}$ is non-binding.

The inner expression of \eqref{eq:bellman} defines the \emph{action-value function} (or $Q$-function), the expected total reward from period $t$ onward at state $(s_t, \bvec{b}_t)$ under action $a_t$ followed by optimal continuation:
\begin{equation}
    Q_t^d(s_t, \bvec{b}_t, a_t) \;:=\; \sum_{k=1}^{K} b_t(k)
    \sum_{x=0}^{x_{\max}}
    \Bigl[\, r(x, s_t, a_t) + V_{t+1}^d\!\bigl(s_{t+1}(x),\; \bvec{b}_{t+1}(x)\bigr) \Bigr]
    f_k(x;\, d, h_t),
    \label{eq:qfunction}
\end{equation}
so that $V_t^d(s_t, \bvec{b}_t) = \max_{a_t} Q_t^d(s_t, \bvec{b}_t, a_t)$. The structural results below are most cleanly stated and proved on $Q_t$, where action comparisons (cross-differences in $(s, a)$) appear directly without unpacking a max.

Throughout, $a^*_t(s,\mathbf{b}) := \min\{a \in \arg\max_{a' \in \mathcal{A}} Q_t(s,\mathbf{b},a')\}$ denotes the smallest-index optimal action (smallest-argmax convention), with $Q_t$ as defined in \eqref{eq:qfunction}.

\begin{restatable}[Backlog Monotonicity]{theorem}{thmBacklog}
\label{prop:backlog_monotone}
For every period $t\in\{1,\ldots,T\}$ and every belief $\mathbf{b}\in\Delta^{K-1}$:
\begin{enumerate}[label=(\roman*)]
    \item The map $s\mapsto V_t(s,\mathbf{b})$ is strictly decreasing.
    \item The smallest optimal allocation $a^*_t(s,\mathbf{b})$ is non-decreasing in $s$.
    \item The policy has bounded rate of increase: $a^*_t(s,\mathbf{b}) \leq a^*_t(s+v,\mathbf{b}) \leq a^*_t(s,\mathbf{b}) + 1$.
\end{enumerate}
\end{restatable}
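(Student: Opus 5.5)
The plan is a backward induction on $t$ that rests on one structural fact about $Q_t$: after an algebraic rewriting, the pair $(s,a)$ enters only through the scalar $s-va$ plus linear terms. Writing $e=x+s$, we have $\min\{e,va\}=e-\max\{e-va,0\}$. Hence, with $q=p-c_p$,
\begin{equation*}
r(x,s,a)=q(x+s)-q\max\{x+s-va,0\}-c_a a-c_b s .
\end{equation*}
Since $s_{t+1}(x)=\max\{x+s-va,0\}$ and $\mathbf{b}_{t+1}(x)$ does not depend on $a$ by (A3), this gives
\begin{equation*}
Q_t(s,\mathbf{b},a)=(q-c_b)s-c_a a+H_t(s-va,\mathbf{b}),
\end{equation*}
with
\begin{equation*}
H_t(u,\mathbf{b})=\sum_k b(k)\sum_x f_k(x)\bigl[qx-q(x+u)^+ +V_{t+1}((x+u)^+,\mathbf{b}_{t+1}(x))\bigr].
\end{equation*}
The induction hypothesis will be that $s\mapsto V_{t+1}(s,\mathbf{b}')$ is strictly decreasing and discretely concave for every $\mathbf{b}'$. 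The base case holds because $V_{T+1}=-c_{\mathrm{lost}}s$ is linear.

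\textbf{Part (iii) first, since it is purely algebraic.} Because $Q_t$ depends on $(s,a)$ only through $s-va$ plus linear terms, we get the exact shift identity
\begin{equation*}
Q_t(s+v,\mathbf{b},a+1)=Q_t(s,\mathbf{b},a)+(q-c_b)v-c_a ,
\end{equation*}
valid for all $a$ with $a+1\in\mathcal{A}$. So on $\{a_{\min}+1,\dots\}$ the function $Q_t(s+v,\mathbf{b},\cdot)$ is $Q_t(s,\mathbf{b},\cdot-1)$ plus a constant. The action $a^*_t(s,\mathbf{b})+1$ is feasible by (A4), and it maximizes $Q_t(s+v,\mathbf{b},\cdot)$ over this restricted set. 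The only extra candidate at $s+v$ is $a_{\min}$, which can only lower the smallest argmax. Hence $a^*_t(s+v,\mathbf{b})\le a^*_t(s,\mathbf{b})+1$, and the lower bound in (iii) is just (ii).

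\textbf{Part (i).} By the induction hypothesis $V_{t+1}$ is nonincreasing in $s$, and $(x+u)^+$ is nondecreasing in $u$, so $u\mapsto H_t(u,\mathbf{b})$ is nonincreasing. The coefficient $q-c_b$ is strictly negative by (A2). Hence each $Q_t(\cdot,\mathbf{b},a)$ is strictly decreasing in $s$, and therefore so is their maximum $V_t$.

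\textbf{Part (ii).} I would show that $Q_t$ has increasing differences in $(s,a)$, and then apply Topkis' theorem, which yields a nondecreasing smallest maximizer. Since $H_t(s-va)$ has a cross-difference of the form $-[\,\cdot\,]$ of second differences of $H_t$ in $u$, increasing differences holds as soon as $u\mapsto H_t(u,\mathbf{b})$ is concave. Concavity of $H_t$ follows from the hypothesis: $-q(x+u)^+$ is concave, and $V_{t+1}(\cdot,\mathbf{b}')$ is nonincreasing and concave, so its composition with the convex map $u\mapsto(x+u)^+$ is concave. Mixtures over $x$ and $k$ preserve concavity.

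\textbf{Main obstacle: propagating concavity.} The hard step is closing the induction, namely showing that $V_t(s,\mathbf{b})=(q-c_b)s+\max_{a}\{-c_a a+H_t(s-va,\mathbf{b})\}$ is again concave in $s$. This is a sup-convolution of the concave $H_t$ with a linear function supported on the lattice $v\mathbb{Z}$. For $v=1$ it is concave by standard $L^\natural$-concavity arguments (Murota). For $v>1$, discrete concavity can fail between multiples of $v$.

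My first attempt would use the continuous relaxation. Extend $H_t$ to a piecewise-linear concave function on $\mathbb{R}$, and note that the inner maximum $W(s)$ satisfies $W(s+v)=W(s)-c_a$. So $W$ is determined on one window $[0,v)$ and is affine-periodic. I would then argue concavity on that window directly, using that the optimal $a$ changes by at most one across the window, which is (iii) applied within the window.

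If exact concavity fails, the fallback is to weaken the induction hypothesis to exactly what Topkis needs. Concretely, I would propagate the single-crossing property of $a\mapsto Q_t(s,\mathbf{b},a+1)-Q_t(s,\mathbf{b},a)$ in $s$, via the weaker statement that $V_t(s+1,\mathbf{b})-V_t(s,\mathbf{b})$ is nonincreasing along each residue class mod $v$, and check that this suffices for the composition step above.
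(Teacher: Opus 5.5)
Your parts (i) and (iii) are sound. The rewriting $Q_t=(q-c_b)s-c_a a+H_t(s-va,\mathbf{b})$ is a tidy way to get the shift identity, which is exactly the paper's translation lemma with $\kappa=(q-c_b)v-c_a$. The gap is in closing the induction that (ii) needs at periods before $T$.

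Your primary hypothesis, discrete concavity of $V_{t+1}$ in $s$, does not propagate when $v\ge 2$, and the calibrated case has $v=2$. No window argument can rescue it. Wherever the optimal action at $s+v$ exceeds $a_{\min}$, your shift identity forces $V_t(s+v)-V_t(s)=\kappa$ exactly. A discretely concave function with constant $v$-step difference on a half-line must be affine there, and that generally fails. For instance, with demand degenerate at $0$ and $q+c_{\mathrm{lost}}>c_a$, one gets $V_T(s)=(q-c_b)s-c_a\lceil s/v\rceil$, a staircase. So as written you obtain (i) and the upper bound in (iii) at every $t$. You obtain (ii), and with it the lower bound in (iii), only at $t=T$ or when $v=1$.

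Your fallback is the right invariant. Saying $V_t(s+1,\mathbf{b})-V_t(s,\mathbf{b})$ is nonincreasing along each residue class mod $v$ is equivalent to saying $s\mapsto V_t(s+v,\mathbf{b})-V_t(s,\mathbf{b})$ is nonincreasing. Together with monotonicity, this is exactly what the paper carries. It suffices for your cross-difference: with $u=s-va$ and $\Delta H_t(u):=H_t(u+1)-H_t(u)$, you have $\Delta_s\Delta_a H_t(s-va)=\Delta H_t(u-v)-\Delta H_t(u)$. The property also passes through the clamp $u\mapsto V_{t+1}((x+u)^+,\mathbf{b}')$, which you can check on the three ranges $x+u\ge 0$, $-v\le x+u<0$ and $x+u<-v$.

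What is missing is the step you flag but never carry out: showing this property survives the maximization over $a$. Here is how to close it.
\begin{enumerate}
\item Use the shift identity and (A4) to collapse the max: $V_t(s+v)=\max\{Q_t(s+v,a_{\min}),\,V_t(s)+\kappa\}$.
\item By (ii) at stage $t$ there is a threshold $s^\circ$ with $a^*_t(s+v)=a_{\min}$ if and only if $s<s^\circ$.
\item For $s<s^\circ$, also $a^*_t(s)=a_{\min}$. Then $V_t(s+v)-V_t(s)=(q-c_b)v+H_t(u+v)-H_t(u)$ with $u=s-va_{\min}$, which is nonincreasing by the same residue-class property of $H_t$.
\item For $s\ge s^\circ$, $a_{\min}$ is not a maximizer at $s+v$, so the difference equals $\kappa$.
\item At the seam, $a_{\min}$ beats $a_{\min}+1$ at backlog $s^\circ-1+v$. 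Rewriting the loser with the shift identity gives $Q_t(s^\circ-1+v,a_{\min})-Q_t(s^\circ-1,a_{\min})\ge\kappa$.
\end{enumerate}
The $v$-step difference is therefore nonincreasing everywhere, and the induction closes.
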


Proof in \ref{app:proofs}.

Theorem \ref{prop:backlog_monotone} captures the core operational regularity of the model. A unit of backlog erodes value at a rate strictly exceeding the margin from clearing it, so backlog is unambiguously costly and the optimal staffing level rises smoothly in backlog, by at most one additional driver per $v$ carried-over orders. The proof utilizes a translation identity that exchanges $v$ extra backlog units for one extra driver at a fixed net cost, packaging the additional revenue, backlog penalty, and driver wage into a single constant. Topkis's theorem applied to the cross-difference of the $Q$-function then gives the result. For an operator this bounded-increase property is a stability guarantee. The roster never jumps by more than one driver for each $v$ orders of accumulated backlog, so the policy responds to congestion gradually rather than swinging the shift size from hour to hour.

\subsection{Solution Algorithm}
\label{sec:algorithm}
%%=========================================================================

Each operating day is a finite-horizon problem with $T$ operating hours. Backward induction computes an optimal policy $\pi(s_t, \bvec{b}_t, t, d)$ in a single backward pass per day-of-week $d$, with no convergence loop and no discount factor. At each stage $t = T, \ldots, 1$, the Q-function $Q_t(s, \bvec{b}, a)$ is evaluated via Monte Carlo. For each regime $k$, demand realizations are sampled from $f_k(\cdot;\, d, h_t)$, and the continuation value is obtained by barycentric interpolation on a regular grid over the belief simplex $\Delta^{K-1}$ with step $\Delta b = 0.05$ ($|\mathcal{B}| = \binom{K-1+1/\Delta b}{K-1}$ grid points). Full pseudocode is provided in  \ref{app:algorithms}. The total cost is $O(7 \cdot T \cdot |\mathcal{S}| \cdot |\mathcal{B}| \cdot |\mathcal{A}| \cdot K \cdot N_{\text{mc}})$, obtained from seven day-of-week policies, each requiring a single backward pass of $T \approx 16$ stages. With $K = 3$ regimes, the belief grid contains $|\mathcal{B}| = 231$ points; combined with $s_{\max}$ backlog levels the state space remains tractable.

At execution time, the belief resets to the stationary distribution $\bvec{\pi}$ of $\bvec{T}$ at the start of each operating day and is updated period-by-period via the correct--predict cycle (Equation \ref{eq:belief_combined}). The policy lookup $a_t = \pi(s_t, \bvec{b}_t, t, d)$ and belief update are both $O(K)$ operations per period.

%%=========================================================================
\section{Data and Parameter Estimation}
\label{sec:demand_estimation}
%%=========================================================================

We validate the framework using operational data from SuperGlovo, the grocery delivery vertical of Glovo, a Spain-based technology platform founded in 2015. SuperGlovo operates micro-fulfillment centers, commonly called dark stores, that carry a curated assortment of grocery products and dispatch orders through contracted full-time driver fleets, with deliveries typically completed within 15 to 30 minutes. The dataset comprises 27 micro-fulfillment centers across 13 cities in Spain, covering 2021--2022. Two cities host eight stores each, and the remaining eleven cities contain a single store. The time series reflects Glovo's phased expansion, beginning with the two multi-store cities and gradually extending to additional locations, resulting in an unbalanced panel. Each record is an hourly time-stamped observation of demand (number of orders), but backlog is not directly recorded. Operating hours vary by store and day of week. Demand is recorded only during active business hours, so supplementary data on store-specific schedules provided by Glovo were incorporated. Records with incomplete schedule metadata were excluded from the analysis.

Table \ref{table:descriptive} in the appendix reports descriptive statistics of hourly demand by city, including mean, standard deviation, maximum, time-series length, KPSS statistic, and approximate entropy \citep{olbrys2022approximate}. Demand levels and variability differ across cities, reflecting differences in urban density and store concentration. The two multi-store cities show higher mean demand, motivating store-specific model parameters. Approximate entropy values indicate that demand irregularity is pervasive across all cities, and the autocorrelation of calendar-adjusted residuals shows persistent structure that calendar features alone cannot explain. Together, these patterns motivate the latent regime structure estimated below. Figure \ref{figure:seas} illustrates the hourly and weekly seasonality for a representative city.

\begin{figure}[htbp]
    \centering
    % First Subfigure: Allocate 49% of the text width
    \begin{subfigure}[b]{0.45\textwidth}
        \centering
        % Note: The width here is relative to the subfigure environment, so we use \textwidth or \linewidth to fill it 100%.
        \includegraphics[width=\textwidth]{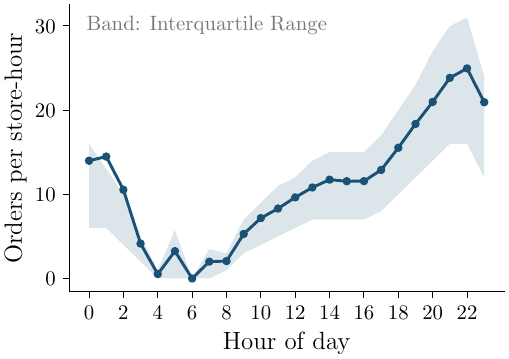}
        \caption{Hourly Demand for City B}
        \label{figure:hourly_seas}
    \end{subfigure}
    \hfill % Pushes the subfigures to the edges, creating a natural gap in the remaining 2% of space
    % Second Subfigure: Allocate the other 49%
    \begin{subfigure}[b]{0.45\textwidth}
        \centering
        \includegraphics[width=\textwidth]{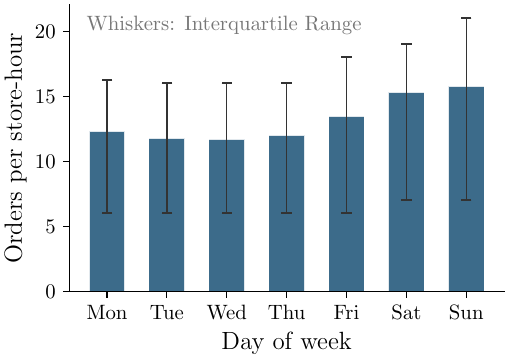}
        \caption{Weekday Demand for City B}
        \label{figure:weekday_seas}
    \end{subfigure}
    
    \caption{Hourly and weekly demand distribution for a representative city. Demand patterns are consistent across all stores; City B is shown for illustration.}
    \label{figure:seas}
\end{figure}

Demand exhibits strong calendar seasonality. Order volumes peak in the early evening and late night, remain steady from late morning through late afternoon, and drop substantially between 1\,AM and 8\,AM when most stores are closed. Weekly patterns show higher demand on weekends than weekdays. Conditional on the calendar period, hourly demand is well described by a log-normal distribution. A Kolmogorov-Smirnov test cannot reject this fit in over 75\% of store-weekday-hour combinations, supporting its use as the emission distribution. Beyond these predictable calendar effects, demand displays within-day persistence. This persistence, visible in the autocorrelation of calendar-adjusted residuals, is what the latent regime structure is designed to capture.

An institutional feature of the data is Spain's \textit{Ley Rider} (Real Decreto-ley 9/2021), enacted in August 2021 and which established a presumption of employment,
reclassifying platform delivery couriers as employees rather than independent contractors
\citep{VieiraMendonca2025}. This regulatory shift, part of a broader European trend toward
platform-worker protections \citep{europarlGigEconomy}, prompted delivery platforms to
restructure their Spanish workforces around this time. Deliveroo exited the market
(over 3{,}000 riders), while Glovo overhauled its app and moved only about 2{,}000 of its
roughly 12{,}000 couriers onto employee contracts, scaling back its active self-employed
roster \citep{Aranguiz2021, VieiraMendonca2025}.

\subsection{Estimation Pipeline}
\label{sec:estimation_overview}

The POMDP formulation in Section \ref{sec:pomdp} requires three inputs: the emission model $f_k(x;\, d, h)$, the transition matrix $\bvec{T}$, and the initial belief $\bvec{b}_1$. The estimation pipeline is applied independently to each store, with one exception. The emission model regime parameters are estimated by pooling demand shocks across all stores. The core challenge is to learn a compact, probabilistic representation of demand regimes that (i) reflects genuine demand variation rather than calendar structure, (ii) is estimated from pooled data across all stores so parameters are well-identified, and (iii) produces per-store, per-slot emission likelihoods that depend on the current calendar context.

We have 93,417 training observations (open stores only) pooled across 27 stores. Not all (store, day-of-week, hour) combinations are observed: 3,126 of the theoretical $27 \times 7 \times 24 = 4{,}536$ tuples appear in the data (stores have limited operating hours), yielding approximately 30 observations per tuple. For each observed (day-of-week, hour) pair $(d, h)$, define the \emph{calendar baseline} as the per-pair sample mean:
\begin{equation}
    \mu_{d,h} = \frac{1}{n_{d,h}} \sum_{\substack{t:\,(d_t,\,h_t)\\=(d,\,h)}} x_t,
    \label{eq:lambda}
\end{equation}
where $n_{d,h}$ is the number of training observations for that pair and $x_t$ is realized demand. This sample mean is the minimum-variance unbiased estimator of $E[x_t \mid d, h]$ under any distribution, requires no modeling assumptions, and serves as the calendar baseline in all subsequent steps.

The calendar baseline $\mu_{d,h}$ captures the \emph{average} demand level for each operating slot. Deviations from this baseline (demand running unexpectedly high or low) are what the latent state should track. We define the \emph{multiplicative demand shock} for observation $t$ as
\begin{equation}
    z_t = \frac{x_t + 1}{\mu_{d,h} + 1}.
    \label{eq:shock}
\end{equation}
A value $z_t \approx 1$ means demand matched the calendar expectation, $z_t > 1$ means demand exceeded it, and $z_t < 1$ means demand fell short. The $+1$ shift is a continuity correction that avoids division by zero when $\mu_{d,h} = 0$ and stabilizes the ratio for small baselines. Because shocks are calendar-normalized, they are comparable across stores of different sizes: a shock of 2.0 means ``twice the expected demand'' regardless of store scale. Because a multiplicative shock is naturally modeled on the log scale, we work with the log-shock $y_t = \log z_t$, which preserves the multiplier interpretation and makes a Gaussian mixture the natural emission model.

\subsubsection*{Regime Clustering and Emission Model}
\label{sec:gmm}

The shocks $z_t$ defined above strip away predictable calendar variation, isolating the residual demand fluctuation that the latent state should capture. Clustering these shocks, rather than raw demand or calendar-level summary statistics, ensures that the resulting regimes represent genuine deviations from expected demand, rather than differences in store size or time-of-day effects. A Gaussian Mixture Model (GMM) is a natural choice for this task. It is a probabilistic generative model that directly provides the emission likelihoods required for belief updates in the POMDP, produces soft (probabilistic) regime assignments that avoid the boundary distortion of hard clustering, and requires no separate distribution-fitting step to recover per-regime densities. We pool all 93,417 training log-shocks $y_t = \log z_t$ across all stores and fit a 1D GMM with $K$ components:
\begin{equation}
    p(y) = \sum_{k=1}^{K} \pi_k^{\text{GMM}} \cdot \mathcal{N}(y;\, \theta_k,\, \sigma_k^2).
    \label{eq:gmm}
\end{equation}
After fitting, sort components by mean so that $\theta_1 < \theta_2 < \cdots < \theta_K$ (component 1 is the lowest-demand regime, component $K$ the highest). For every observation $t$, compute the responsibility vector:
\begin{equation}
    \gamma_t(k) = \frac{\pi_k^{\text{GMM}} \cdot \mathcal{N}(y_t;\,\theta_k,\,\sigma_k^2)}
                       {\displaystyle\sum_{j=1}^{K} \pi_j^{\text{GMM}} \cdot \mathcal{N}(y_t;\,\theta_j,\,\sigma_j^2)},
    \quad k = 1,\ldots,K.
    \label{eq:responsibility}
\end{equation}
Each $\gamma_t(k) \in (0,1)$ is the posterior probability that observation $t$ was generated by regime $k$.

The GMM fit yields two objects. The first is the set of shared regime parameters $(\theta_k, \sigma_k)_{k=1}^K$, used in the emission model below. The second is a $(93,417 \times K)$ responsibility matrix $\bm{\Gamma}$ with rows $\bm{\gamma}_t$, to initialize the per-store transition matrix estiamtion (Section \ref{sec:transition}). The mixing weights $\pi_k^{\text{GMM}}$ serve as a consistency check against the stationary distribution of each store's transition matrix.

No separate fitting step is needed for the emission model. The log-shock definition $y_t = \log\left((x_t + 1)/(\mu_{d,h}+1)\right)$ and the GMM model $y_t \mid \text{regime }k \sim \mathcal{N}(\theta_k, \sigma_k^2)$ together imply:
\begin{equation}
    x_t + 1 \mid \text{regime } k,\; d,\; h \;\sim\;
    \mathrm{LogNormal}\!\left(\theta_k + \log(\mu_{d,h}+1),\;\; \sigma_k^2\right),
    \label{eq:x_given_k}
\end{equation}
with log-mean $\theta_k + \log(\mu_{d,h}+1)$ and log-standard-deviation $\sigma_k$. The time-specific emission is therefore:
\begin{equation}
    f_k(x;\, d, h) =
    \mathrm{LogNormal}\!\left(x+1;\;\;
    \underbrace{\sigma_k}_{\text{shape}},\;\;
    \underbrace{e^{\theta_k}(\mu_{d,h}+1)}_{\text{scale}}\right).
    \label{eq:emission}
\end{equation}
The calendar baseline $\mu_{d,h}$ sets the baseline demand level, $e^{\theta_k}$ is the shared regime multiplier, and $\sigma_k$ is the shared log-scale spread, both from the GMM. For large $\mu_{d,h}$ the median demand scales as $e^{\theta_k}\mu_{d,h}$, and the coefficient of variation $\sqrt{e^{\sigma_k^2}-1}$ depends only on the regime, not the store.

The estimation pipeline cleanly separates shared and per-store quantities. The regime parameters $(\theta_k, \sigma_k)_{k=1}^K$ and mixing weights $\pi_k^{\text{GMM}}$ are shared across all stores and estimated once from all pooled shocks. The calendar baselines $\mu_{d,h}$, transition matrix $\bvec{T}$ (Section \ref{sec:transition}), emission $f_k(x;\, d, h)$, and policy $\pi$ are per-store. This separation is natural because the regime dynamics (how demand evolves across hours) are structural and similar across stores, while the calendar baseline captures the level. High demand may mean 50 orders in one store and 10 in another.

The number of latent regimes is selected using the Bayesian Information Criterion (BIC), which penalizes model complexity to guard against overfitting. The BIC drops sharply from $K=1$ to $K=2$ ($-17{,}322$), improves meaningfully at $K=3$ ($-1{,}345$), and flattens thereafter. The gain from $K=3$ to $K=4$ is only $-581$, less than a half of the preceding drop, and a fifth component adds almost nothing (-54). We therefore select $K=3$.

The three regimes, sorted by mean, correspond to multiplicative shocks to the calendar mean demand $\mu_{d,h}$. A shock of 1.0 means current demand mean matched the calendar mean exactly, while values below or above 1.0 indicate demand mean falling short of or exceeding the calendar mean for that particular store. Regime 0 ($e^{\theta_0} \approx 0.58$) captures hours where demand runs roughly 42\% below the calendar mean. Regime 1 ($e^{\theta_1} \approx 0.78$) captures demand running about 22\% below the mean, and Regime 2 ($e^{\theta_2} \approx 1.18$) captures demand approximately 18\% above. The mixing weights indicate that most operating hours fall in Regimes 1 and 2 ($\pi_1 = 0.43$, $\pi_2 = 0.45$), with Regime 0 accounting for the remaining 12\% of hours.

Figure \ref{figure:GMM_mixture_plot} displays the histogram of observed log-shocks with the fitted three-component mixture density overlaid. The black curve shows the overall mixture fit, which closely tracks the empirical distribution across the full support. The individual component densities have clearly separated means, though their supports overlap substantially. This overlap is what limits how sharply a single hourly observation can identify the active regime.

\begin{figure}[htbp]
    \centering
    \includegraphics[width=0.8\textwidth]{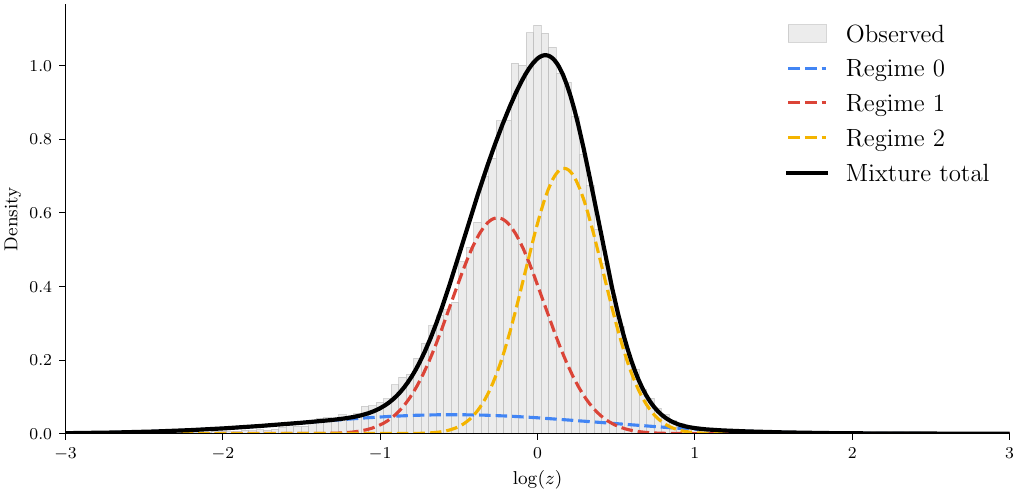}
    \caption{Histogram of observed log-shocks with the fitted
    three-component Gaussian mixture density.}
    \label{figure:GMM_mixture_plot}
\end{figure}

\subsubsection*{Transition Dynamics}
\label{sec:transition}

We estimate one transition matrix $\bvec{T}$ per store by maximum likelihood, holding the pooled GMM emission components $(\theta_k, \sigma_k)$ from Section \ref{sec:gmm} fixed. The only quantity that evolves stochastically across operating hours is the active demand regime, the latent state whose multiplier $e^{\theta_k}$ scales the calendar baseline up or down. The matrix $\bvec{T}$ therefore governs the regime and not the demand level directly, and it captures how the regime persists or shifts from one operating hour to the next within a day. Including overnight pairs would inject spurious structure into $\bvec{T}$, so we restrict to consecutive operating-hour pairs within the same day. From an average of 3,460 training observations per store, removing the last observation of each operating day (which has no valid ``next hour'' within that day) leaves an average of 3,120 valid within-day consecutive pairs per store. For $K = 3$ (6 free parameters per matrix after row-normalization), this is well-powered.

We use the Baum--Welch algorithm to estimate $\bvec{T}$, a standard expectation-maximization procedure for hidden Markov models \citep{baum1970maximization, rabiner1989tutorial}. It holds the emission components fixed at the pooled GMM estimates, tying the initial distribution of each day to the stationary distribution of $\bvec{T}$, which matches the daily belief reset in the POMDP. %Appendix \ref{app:estimation_bias} further supports this choice versus a simpler classify-then count rule.

Figure \ref{figure:transition_matrix_store} displays the estimated transition matrix for store B6. The dominant pattern is strong diagonal persistence. Each regime renews itself with probability 0.64 to 0.88 from one hour to the next, and the second eigenvalue of the matrix is 0.77, so the information carried by an hourly observation decays with a half-life of roughly 2.6 hours. Regime 2 is the most absorbing state and the stationary distribution places 51\% of its weight there, with 45\% on Regime~1 and about 5\% on the low-demand Regime~0. Because regimes are persistent but not permanent, a single hour of unusually high demand is informative about the next several hours without being deterministic. Whether the POMDP can translate this predictability into meaningful staffing gains is an empirical question addressed in Section~\ref{sec:single_store_benchmarks}. In the POMDP computation, each store uses its own estimated transition matrix $\bvec{T}$ rather than a pooled average, allowing the policy to adapt to store-specific regime dynamics.
The starting belief $\bvec{b}_1$ is set to the stationary distribution $\bvec{\pi}$ of $\bvec{T}$, satisfying $\bvec{\pi}^\top \bvec{T} = \bvec{\pi}^\top$. This is the principled uninformative prior for the first operating hour of a session.

\begin{figure}[htbp]
   \centering
   \includegraphics[width=0.4\textwidth]{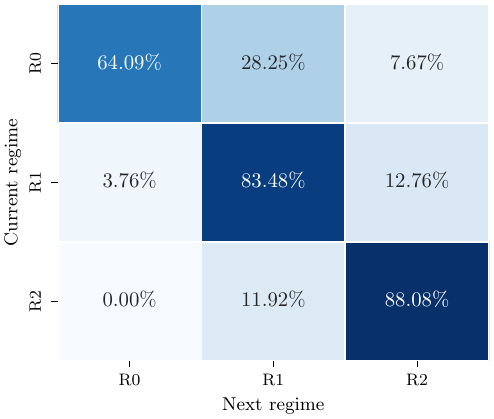}
   \caption{Estimated transition matrix for store B6.}
   \label{figure:transition_matrix_store}
\end{figure}

Two checks confirm that this persistence is a property of the data rather than an artifact of the estimator. The Baum--Welch matrices improve one-step-ahead predictive likelihood on the held-out test period at 25 of the 27 stores, and the log-shock autocorrelation they imply closely matches the autocorrelation observed in the data. The estimated stationary distribution is also broadly consistent with the GMM mixing weights for the middle and high regimes, placing less mass on the rare low-demand regime once persistence is accounted for. %By contrast, the simpler classify-then-count estimator recovers $|\lambda_2| \leq 0.15$ on synthetic data regardless of the true persistence, so correct estimation is what makes the information value of Section~\ref{sec:single_store_benchmarks} measurable at all (Appendix~\ref{app:estimation_bias}).

% \begin{figure}[htbp]
%     \centering
%     \begin{minipage}[c]{0.45\textwidth}
%         \centering
%         \includegraphics[width=\textwidth]{figures/fig_transition_matrix.pdf}
%         \caption{Estimated transition matrix for store~A.}
%         \label{figure:transition_matrix_store}
%     \end{minipage}
%     \hfill 
%     \begin{minipage}[c]{0.50\textwidth}
%         Two checks confirm that this persistence is a property of the data rather than an artifact of the estimator. The Baum--Welch matrices improve one-step-ahead predictive likelihood on the held-out test period at 25 of the 27 stores, and the log-shock autocorrelation they imply closely matches the autocorrelation observed in the data. The estimated stationary distribution is also broadly consistent with the GMM mixing weights for the middle and high regimes, placing less mass on the rare low-demand regime once persistence is accounted for.
%     \end{minipage}
% \end{figure}

%The starting belief $\bvec{b}_1$ is set to the stationary distribution $\bvec{\pi}$ of $\bvec{T}$, satisfying $\bvec{\pi}^\top \bvec{T} = \bvec{\pi}^\top$. This is the principled uninformative prior for the first operating hour of a session.

%%=========================================================================
\section{Computational Results}
\label{sec:single_store_results}
%%=========================================================================

\subsection{Experimental Setup}
\label{sec:experimental_setup}

We calibrate the model parameters to reflect the operating economics of a quick grocery delivery service. Exact cost and revenue figures are confidential, so the values used are representative approximations validated by Glovo's operations team. Table~\ref{tab:base_params} collects all base-case values. Revenue per fulfilled order is set to $p = 20$, the hourly driver wage to $c_a = 15$, and the picking cost to $c_p = 5$ per fulfilled order. Together these imply a positive margin per driver-hour when a driver is fully utilized: $vp - vc_p - c_a = 2(20) - 2(5) - 15 = 15 > 0$, satisfying the viability condition stated in Section~\ref{sec:pomdp}. Driver capacity is set to $v = 2$ orders per driver per hour, reflecting the operational reality that each delivery involves travel time, handoff, and return, limiting throughput. The backlog penalty $c_b = 18$ per carried-over order per period satisfies $c_b > p - c_p = 15$, ensuring that carrying backlog is never preferable to fulfilling demand on time. Without this condition, the model would perversely incentivize accumulating backlog.

\begin{table}[t]
\centering\small
\caption{Base-case parameter values for single-store experiments.}
\label{tab:base_params}
\begin{tabular}{@{}llrl@{}}
\toprule
\textbf{Symbol} & \textbf{Description} & \textbf{Value} & \textbf{Unit} \\
\midrule
\multicolumn{4}{@{}l}{\emph{Economic parameters}} \\[2pt]
$p$   & Revenue per fulfilled order        & 20  & \$/order \\
$c_a$ & Driver wage                        & 15  & \$/driver-hour \\
$c_p$ & Picking cost per fulfilled order   & 5   & \$/order \\
$c_b$ & Backlog penalty                    & 18  & \$/order/period \\
$c_{\text{lost}}$ & Terminal lost-sale penalty  & 25  & \$/order \\
$v$   & Driver capacity                    & 2   & orders/driver/hour \\
\midrule
\multicolumn{4}{@{}l}{\emph{Demand model}} \\[2pt]
$K$   & Number of regimes                  & 3   & --- \\
\midrule
\multicolumn{4}{@{}l}{\emph{Discretization and computation}} \\[2pt]
$a_{\max}$ & Maximum drivers             & 50  & drivers \\
$s_{\max}$ & Maximum backlog             & 30  & orders \\
$\Delta b$ & Belief-grid step            & 0.05 & --- \\
$|\mathcal{B}|$ & Belief-grid points     & 231 & --- \\
$N_{\text{mc}}$ & Monte Carlo samples per cell & 1{,}000/300 & --- \\
\bottomrule
\end{tabular}
\end{table}

The action space ranges from $a = 0$ (no drivers) to $a_{\max} = 50$. At full staffing the store can fulfill $v \cdot a_{\max} = 100$ orders per hour. Across the panel, hourly demand exceeds this capacity ceiling in approximately 0.09\% of operating hours, so the constraint is active only during rare peak episodes. Backlog takes integer values $s \in \{0, 1, \ldots, 30\}$. The upper bound $s_{\max} = 30$ acts as a truncation: any backlog exceeding 30 orders is capped at the boundary, with the terminal penalty $c_{\text{lost}}$ applying to the full unserved quantity at end of day. Both $a_{\max}$ and $s_{\max}$ are set for computational tractability. In the validation (Section~\ref{sec:single_store_benchmarks}), the POMDP policy stays below $a_{\max}$, peaking near 28 drivers in the evening, and reaches the backlog cap $s_{\max}$ only in rare peak hours. Because all benchmark policies are evaluated under the same state-space truncation and on the same demand realizations, the relative performance comparison remains valid even where the bounds are reached.

The belief simplex $\Delta^{K-1}$ is discretized with step $\Delta b = 0.05$, yielding $|\mathcal{B}| = \binom{22}{2} = 231$ grid points. Because belief updates produce continuous vectors that do not coincide with grid points, value-function lookups use barycentric interpolation on the simplex grid, as described in Section~\ref{sec:algorithm}. The same interpolation is applied at execution time to the stored Q-function.

%%=========================================================================
\subsubsection{Evaluation Protocol}
\label{sec:Evaluation_protocol}
%%=========================================================================

To prevent information leakage, all model components are estimated on a training set and validated on a held-out test set using a temporal split. Dates are sorted chronologically, with the first 80\% forming the training set and the remaining 20\% forming the test set. A temporal rather than random split is the standard protocol for time-series evaluation, ensuring that the model is never trained on future data. Because the panel is unbalanced (stores enter at different times), the calendar cutoff date is common across stores, so some stores contribute fewer train observations than others. On the
training data we estimate the pooled GMM regime parameters $(\theta_k, \sigma_k)$ (Equation~\ref{eq:gmm}) once across all stores and, independently for each store, the calendar baselines $\mu_{d,h}$ (Equation~\ref{eq:lambda}) and the transition matrix $\bvec{T}$ (Section~\ref{sec:transition}). Backward induction (Algorithm~\ref{alg:backward_induction}) is then solved for each of the seven days of the week using the training-set parameters, producing day- and time-indexed policy and Q-function tables.

We select store B6, one of the eight stores in City B, as the representative store for the single-store analysis. It operates all seven days of the week, giving a complete schedule with no missing day-of-week combinations, and its demand profile exercises the policy’s full staffing range, from a handful of drivers at opening to nearly thirty at the Sunday evening peak.

The computational cost of backward induction is incurred once, offline, during the training phase. Once the policy and Q-function tables are computed and stored, the online phase reduces to a simple table lookup. At each period, the action is determined from the current state $(s, \bvec{b}, t)$ using the precomputed Q-function and barycentric interpolation, an $O(K)$ operation per period. This two-phase structure makes the policy practical for real-time deployment, even under the tight decision cadence of hourly staffing updates. Each cell of the Q-function (Equation~\ref{eq:qfunction}) is estimated with $N_{\text{mc}} = 300-1{,}000$ (cross-store sweep and focal store analysis) Monte Carlo draws per regime. Doubling $N_{\text{mc}}$ changes the optimal action in fewer than 4\% of state-belief-time cells, confirming adequate convergence.

Validation proceeds via forward evaluation (Algorithm~\ref{alg:online}) on the test-set demand realizations. At the start of each operating day the belief is reset to the stationary distribution $\bvec{\pi}$, backlog is reset to zero, and the precomputed policy prescribes driver counts hour by hour via the table lookup described above. Rewards are computed using the realized test-set demand (not simulated demand), so the comparison reflects out-of-sample performance under actual demand conditions. Each benchmark policy listed below is evaluated on the identical demand sequence with independent backlog dynamics, ensuring a controlled comparison in which only the staffing policy differs.

We compare three policies, each adding one modeling ingredient:

\begin{enumerate}

    \item \textbf{Myopic.} Each hour is treated independently with no backlog dynamics and no regime learning. The staffing decision maximizes single-period expected profit using the calendar baseline $\mu_{d,h}$ as the demand forecast. This serves as the comparison baseline. It is a calendar-only reference point and a conservative lower bound, since a practitioner who watches the current queue would staff up as backlog builds.

    \item \textbf{Static-Belief POMDP.} The full dynamic program is solved but the belief is fixed at the stationary prior $\bvec{\pi}$ and never updated. The policy accounts for backlog dynamics but ignores within-day regime learning. The gap between this and Myopic isolates the benefit of multi-period optimization accounting for backlog dynamics.

    \item \textbf{POMDP (Full Model).} The full dynamic program with Bayesian belief updating after each hourly observation. The gap between this and the Static-Belief policy isolates the benefit of regime learning and the value of information.

\end{enumerate}

%%=========================================================================
\subsection{Structural Properties of the Optimal Policy}
\label{sec:structural_results}
%%=========================================================================

The structural theorem in Section \ref{sec:structural} is proved for the exact finite-horizon Bellman recursion. The discretized solver, which uses Monte Carlo sampling of the emission and barycentric interpolation on the belief grid, need not preserve these properties exactly, so the figures in this subsection are an empirical check rather than a guarantee. All figures in this subsection are computed for store B6 on Sundays, the highest-demand day (mean hourly baseline
$\bar{\mu} = 11.0$).

\begin{figure}[htbp]
    \centering
    \begin{subfigure}[b]{0.48\textwidth}
        \centering
        \includegraphics[width=\textwidth]{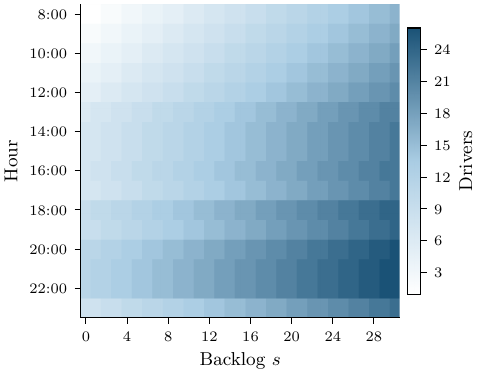}
        \caption{Optimal action $a^*(s, \bvec{b}_0, t)$}
        \label{fig:action_hour_backlog}
    \end{subfigure}
    \hfill
    \begin{subfigure}[b]{0.48\textwidth}
        \centering
        \includegraphics[width=\textwidth]{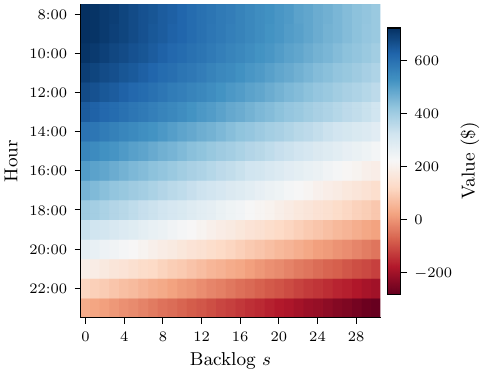}
        \caption{Value function $V(s, \bvec{b}_0, t)$}
        \label{fig:value_hour_backlog}
    \end{subfigure}
    \caption{Optimal policy and value function as a function of backlog
    and operating hour, with belief fixed at the stationary prior
    $\bvec{b}_0 = \bvec{\pi}$.}
    \label{fig:hour_backlog}
\end{figure}

Figure~\ref{fig:hour_backlog} displays the optimal policy and value function across backlog and operating hours, with belief fixed at the stationary prior. First, note the optimal number of drivers is non-decreasing in backlog at every hour, consistent with Theorem \ref{prop:backlog_monotone}. Second, the staffing level tracks the calendar demand profile, rising from a handful of drivers in the early morning through the midday peak and reaching its highest levels during the evening rush. The policy here has room to differentiate across the full range of demand levels, deploying more drivers during the evening peak than during the morning. Even at the highest backlog levels during the evening peak, the allocation tops out at 28 drivers, well below the action cap of 50. The policy also reduces staffing in the final hour as demand falls and the terminal horizon removes the need to protect against future backlog.

The value function (Figure~\ref{fig:value_hour_backlog}) confirms that backlog is unambiguously costly. Also consistent with Theorem \ref{prop:backlog_monotone}, we see that the expected remaining profit is strictly decreasing in $s$ at every hour, and values turn negative in the late evening once backlog is high, from $s \geq 26$ at 20:00 and widening toward closing. This end-of-day pressure is what drives the policy to staff aggressively during the evening, clearing the queue while there are still hours remaining to recover the cost.

\begin{figure}[htbp]
    \centering
    \begin{subfigure}[b]{0.48\textwidth}
        \centering
        \includegraphics[width=\textwidth]{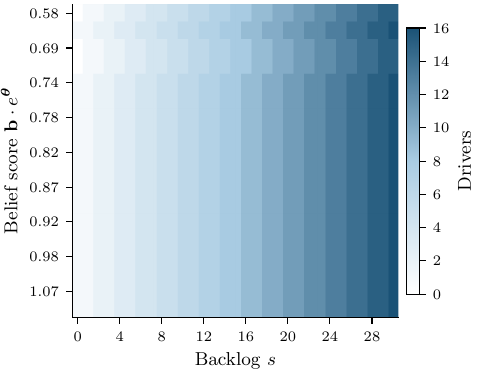}
        \caption{Optimal action $a^*(s, \bvec{b}, t_0)$}
        \label{fig:action_belief_backlog}
    \end{subfigure}
    \hfill
    \begin{subfigure}[b]{0.48\textwidth}
        \centering
        \includegraphics[width=\textwidth]{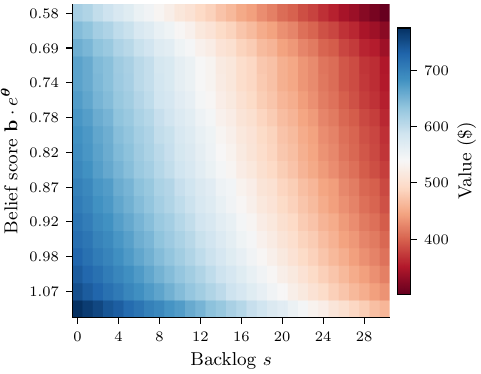}
        \caption{Value function $V(s, \bvec{b}, t_0)$}
        \label{fig:value_belief_backlog}
    \end{subfigure}
    \caption{Optimal policy and value function at the first operating hour ($t_0 = 8$:00) as a function of backlog $s$ and belief $\bvec{b}$. Rows are ordered from low-demand beliefs at the top to high-demand beliefs at the bottom. The ordering uses the scalar score $\bvec{b} \cdot e^{\bvec{\theta}} = \mathbb{E}[e^{\bvec{\theta}} \mid \bvec{b}]$, the posterior mean of the multiplier $e^{\theta_k}$, where each $e^{\theta_k}$ scales the calendar baseline $\mu_{d,h}$ to the conditional median demand of regime $k$ (Equation~\eqref{eq:emission}).}
    \label{fig:belief_backlog}
\end{figure}

Figure~\ref{fig:belief_backlog} fixes the hour at $t_0$ and varies the belief state. The policy is non-decreasing in backlog at every belief level, consistent with the pattern in Figure~\ref{fig:hour_backlog} and with Theorem \ref{prop:backlog_monotone}. Along the belief dimension, the action oscillates between adjacent values rather than increasing smoothly. The scalar score $\bvec{b} \cdot e^{\bvec{\theta}}$ records only the posterior mean demand multiplier, whereas the optimal staffing responds to the entire belief distribution, and in particular to how much probability mass sits on the high-demand regime and to the resulting variance of next-period demand. Two beliefs with the same mean score can carry different surge risk, so ordering states by a single scalar collapses beliefs that the policy treats differently, and the action need not move monotonically along that ordering. The size of the belief response also varies over the day. At the first hour it is narrow, shifting by at most one driver across the full belief spectrum at any fixed backlog, while by the evening peak the action spans up to five drivers across the simplex, once the filtered belief has had time to drift from the prior.

%%=========================================================================
\subsection{Value of Information Across Stores}
\label{sec:single_store_benchmarks}
%%=========================================================================

We evaluate the Myopic, Static-Belief, and Full POMDP policies across all 27 stores over the test period. Table~\ref{tab:cross_store} reports the aggregate results.

\begin{table}[htbp]
\centering\small
\caption{Cross-store policy comparison.}
\label{tab:cross_store}
\begin{tabular}{@{}lrrr@{}}
\toprule
& \textbf{Myopic} & \textbf{Static-Belief} & \textbf{Full POMDP} \\
\midrule
Total reward (\$)
  & $-3{,}212{,}070$ & $+1{,}730{,}501$ & $+1{,}918{,}981$ \\
Advantage over Myopic (\$)
  & --- & $+4{,}942{,}571$ & $+5{,}131{,}051$ \\
\bottomrule
\end{tabular}
\end{table}

The Static-Belief POMDP accumulates \$4.94M in additional reward over Myopic across the 27 stores. By accounting for backlog dynamics and the terminal penalty, this policy staffs more aggressively when the queue is building, preventing the compounding losses that a period-by-period rule ignores. Figure \ref{fig:cumulative_profit} traces cumulative total reward over the test period, giving each finding its own panel and scale. Panel \ref{fig:cumulative_profit_a} shows the value of multi-period optimization. Myopic staffing loses \$3.21M, since a calendar-based rule is unprofitable at 19 of the 27 stores once unfulfilled orders are allowed to compound as backlog, while the Static-Belief dynamic program earns \$1.73M on the same demand sequence. Panel \ref{fig:cumulative_profit_b} isolates the value of information. Real-time belief updating lifts the \$1.73M to \$1.92M, and the shaded wedge between the two trajectories widens steadily throughout the test period rather than being driven by a few outlier days.

The Full POMDP thus adds a further \$188K over the Static-Belief policy, a 10.9\% increase relative to Static-Belief. The gain is positive at 25 of the 27 stores and is concentrated where the estimated regimes are most persistent. The two negative entries are smaller than the Monte-Carlo and sampling error of the evaluation, roughly one thousand dollars per store, so their sign is not informative as both lie inside the shaded noise band in Figure~\ref{fig:voi_mechanism}

\begin{figure}[htbp]
    \centering
    \begin{subfigure}[b]{0.45\textwidth}
        \centering
        \includegraphics[width=\textwidth]{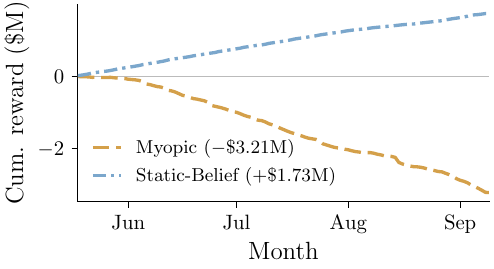}
        \caption{Multi-period optimization (Value of Backlog)}
        \label{fig:cumulative_profit_a}
    \end{subfigure}
    \hfill
    \begin{subfigure}[b]{0.45\textwidth}
        \centering
        \includegraphics[width=\textwidth]{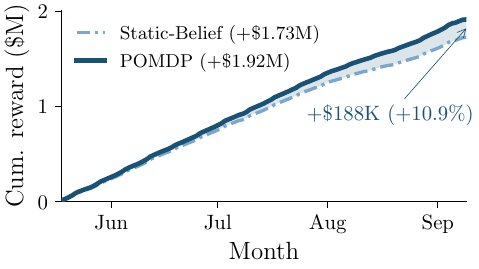}
        \caption{Regime Belief Learning (Value of Information)}
        \label{fig:cumulative_profit_b}
    \end{subfigure}
    \caption{Cumulative Reward Over Time}
    \label{fig:cumulative_profit}
\end{figure}

% \begin{figure}[htbp]
%     \centering
%     \includegraphics[width=0.95\textwidth]{figures/fig_cumulative_profit.pdf}
%     \caption{Cumulative total reward across all 27 stores.
%     (a)~Multi-period optimization: calendar-based Myopic staffing
%     loses \$3.21M as backlog compounds, while the Static-Belief
%     dynamic program earns \$1.73M. (b)~Value of information:
%     real-time belief updating (Full POMDP) adds a further \$188K
%     over Static-Belief, or 10.9\% of realized profit (shaded
%     wedge).}
%     \label{fig:cumulative_profit}
% \end{figure}

Figure~\ref{fig:voi_mechanism} plots each store's belief-updating gain against the persistence of its estimated regimes, measured by the second-largest eigenvalue modulus $|\lambda_2(\bvec{T})|$ (Spearman $\rho = 0.58, p \approx 0.002, n = 27$). This metric reinforces the idea that the stickier the regimes, the more learning is worth. Intuitively, $|\lambda_2|$ measures how long what a store just learned about its demand state stays true. Stores whose elevated demand comes from persistent conditions such as weather, local events, or supply disruptions reward real-time tracking, while stores whose hourly variation is mostly noise around the calendar gain nothing from it. The stickiest store's gain reaches \$41K, or 49\% of that store's POMDP reward. Because persistence is observable at estimation time, it identifies in advance the stores where real-time learning is worth its implementation cost.

At the four circled stores the Static-Belief policy earns less than the one-period Myopic newsvendor, by up to \$27K. A frozen belief implicitly assumes demand reverts toward its long-run average each hour, but at sticky stores where the regime persists, the lookahead optimizes against the wrong forecast (the belief goes back to stationary every period) and its backlog-aware adjustments amplify the error rather than correct it. The effect strengthens with persistence (Spearman $\rho = -0.40$, $p = 0.04$ between $|\lambda_2(\bvec{T})|$ and this gap). Restoring belief updating reverses it, and the POMDP beats Myopic in all stores. As an ablation, this isolates belief updating as the component that keeps multi-period optimization from backfiring. To measure how much of the attainable information value the Bayesian filter captures, \ref{app:info_ladder} reports an information-ladder ablation. On synthetic days drawn from the fitted model for store B6, where the true regime path is known, the Bayesian filter realizes about two thirds of the information value attainable by any causal policy, the remainder being intrinsic to overlapping regime emissions.

%Figure~\ref{fig:voi_mechanism}(b) places the realized gain against the relevant information bounds, using synthetic days from the fitted model for store~A so the true regime path is known. Because these days are generated under the fitted model with the regime path known, the ladder measures what the filter can extract within the model rather than an out-of-sample value. Four policies share the same action rule and differ only in the belief they condition on. Perfect information about the current regime, the expected value of perfect information (EVPI), would be worth about 20\% per day over the static-belief policy, but no causal policy can reach it. Knowing the previous hour's regime, propagated one step through $\bvec{T}$, is worth about 12\%, so half the EVPI is lost to one hour of regime mixing. The Bayesian filter captures about 8\%, two thirds of this attainable bound, and the shortfall reflects regime emissions too overlapping to pin down the regime from a single hour of demand.

\begin{figure}[htbp]
    \centering
    \includegraphics[width=0.48\textwidth]{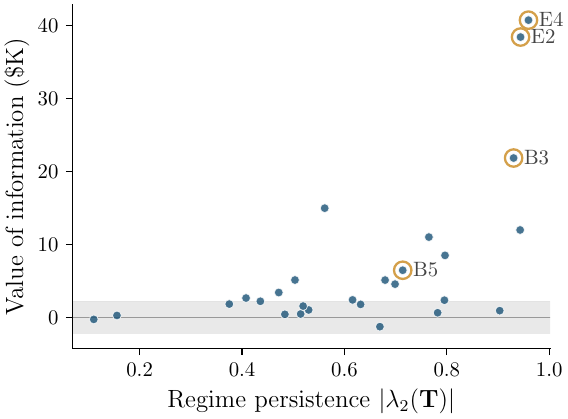}
    \caption{Per-store value of information, measured as the (Full POMDP $-$ Static-Belief) reward gap over the test period, against regime persistence $|\lambda_2(\bvec{T})|$. The shaded band marks $\pm 2\sigma$ of the evaluation noise. Circled points mark the stores where Static-Belief underperforms even Myopic and belief updating rescues the dynamic model.}
    \label{fig:voi_mechanism}
\end{figure}

% Figure~\ref{fig:voi_mechanism}(b) places the realized gain against the relevant information bounds, using synthetic days from the fitted model for store B6 so the true regime path is known. Because these days are generated under the fitted model with the regime path known, the ladder measures what the filter can extract within the model rather than an out-of-sample value. Four policies share the same action rule and differ only in the belief they condition on. Perfect information about the current regime, the expected value of perfect information (EVPI), would be worth about 20\% per day over the static-belief policy, but no causal policy can reach it. Knowing the previous hour's regime, propagated one step through $\bvec{T}$, is worth about 12\%, so half of the EVPI is lost to one hour of regime mixing. The Bayesian filter captures about 8\%, two thirds of this attainable bound, and the shortfall reflects regime emissions too overlapping to pin down the regime from a single hour of demand.

%%=========================================================================
% \subsection{Sensitivity Analysis}
% \label{sec:sensitivity}
%%=========================================================================

We assess the robustness of the POMDP policy by sweeping four economic parameters one at a time while holding all others at their base-case values. For each setting, backward induction is re-solved and the policy is re-evaluated on the same test sequence for store B6. Figure~\ref{fig:sensitivity_overview} plots the POMDP total reward as each parameter is varied from its baseline, expressed as a percentage change. The four parameters separate into three tiers by the reward range they induce. Revenue $p$ is largest (\$408K across its sweep), and it acts as a pure multiplier on a fixed operating plan. Reward moves almost exactly \$102K per \$5 of $p$ with deployment and fulfillment essentially unchanged, and the policy keeps serving nearly all demand even at $p = 10$, where the per fully-utilized driver-hour is negative, because the backlog and terminal penalties make under-serving costlier than the operating loss.

\begin{figure}[htbp]
    \centering
    \begin{subfigure}[b]{0.245\textwidth}\centering
        \includegraphics[width=\textwidth]{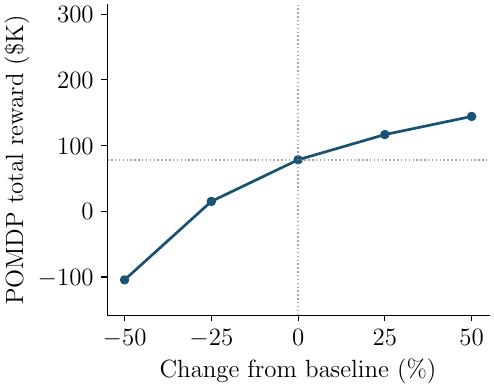}
        \caption{Driver capacity $v$}
    \end{subfigure}\hfill
    \begin{subfigure}[b]{0.245\textwidth}\centering
        \includegraphics[width=\textwidth]{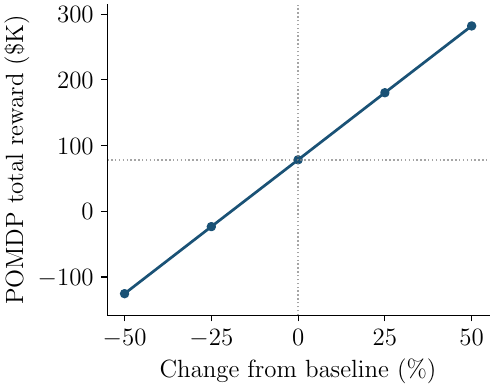}
        \caption{Revenue $p$}
    \end{subfigure}\hfill
    \begin{subfigure}[b]{0.245\textwidth}\centering
        \includegraphics[width=\textwidth]{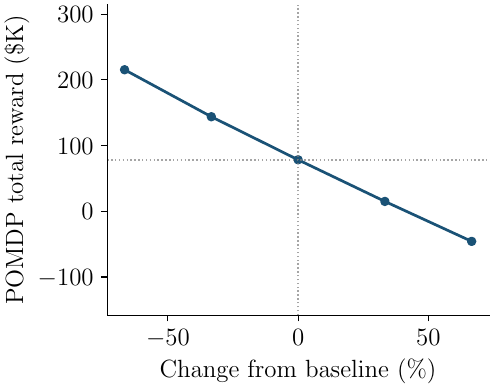}
        \caption{Driver wage $c_a$}
    \end{subfigure}\hfill
    \begin{subfigure}[b]{0.245\textwidth}\centering
        \includegraphics[width=\textwidth]{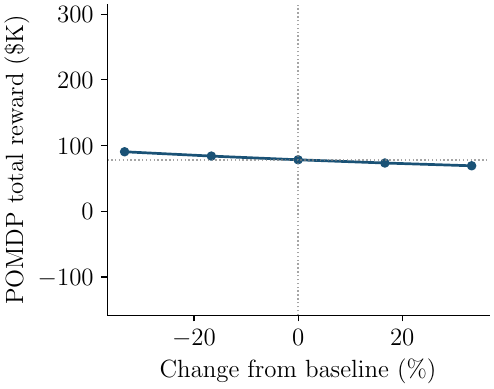}
        \caption{Backlog penalty $c_b$}
    \end{subfigure}
    \caption{Sensitivity of the POMDP total reward to one-at-a-time parameter perturbations (store B6, 115-day test period), each panel expressed as percentage change from baseline and drawn on a common reward scale.}
    \label{fig:sensitivity_overview}
\end{figure}

The two labor parameters $c_a$ and $v$ follow (\$261K and \$249K). Driver capacity $v$ is the most nonlinear of the four. At $v = 1$ the store needs almost twice the baseline fleet (21{,}523 versus 11{,}481 driver-hours) to hold service and operates at a loss, and gains saturate above the baseline. The remaining penalty parameter $c_b$ trail by an order of magnitude or more (\$21K) and is close to linear, with the policy trimming deployment as wages rise and holding the queue shorter as $c_b$ rises.

%%=========================================================================
\section{Multi-Store Driver Allocation via Lagrangian Relaxation}
\label{sec:multistore}
%%=========================================================================

The single-store POMDP (Sections~\ref{sec:pomdp}--\ref{sec:algorithm}) determines the optimal number of drivers for one store in isolation. In practice, a platform may operate $L$ stores in the same city, drawing from a shared pool of $N_t$ available drivers each period. We index stores by $\ell \in \{1, \ldots, L\}$. Each store $\ell$ has its own parameters ($\bvec{T}_\ell$, $\mu_{\ell,d,h}$, $f_{\ell,k}$) and policy $\pi_\ell$, all computed as in Sections~\ref{sec:pomdp}--\ref{sec:demand_estimation}. The network-level problem couples all stores through a capacity constraint:
\begin{equation}
    \max_{\{a_{\ell,t}\}_{\ell=1}^L} \;\; \sum_{\ell=1}^{L} \mathbb{E}\!\left[\sum_{t=1}^{T} r_\ell(x_{\ell,t},\, s_{\ell,t},\, a_{\ell,t})\right]
    \quad \text{s.t.} \quad \sum_{\ell=1}^{L} a_{\ell,t} \leq N_t, \quad \forall\, t,
    \label{eq:network_problem}
\end{equation}
where $a_{\ell,t}$ is the number of drivers allocated to store $\ell$ in period $t$, and each store $\ell$ has its own POMDP state $(s_{\ell,t}, \bvec{b}_{\ell,t})$ evolving independently conditional on the allocation. Solving this jointly is intractable. The state space is the product of all per-store state spaces, and the action space is combinatorial. We instead derive a per-store \emph{priority index}, a scalar that captures how urgently each store needs its next driver in its current state, and allocate drivers greedily by index each period. The Lagrangian relaxation provides the formal framework for constructing this index and bounding the suboptimality of the resulting policy.

\subsection{Lagrangian Decomposition}

We relax the coupling constraint by introducing a Lagrange multiplier $\lambda \geq 0$, interpreted as the \emph{shadow price per driver}. Attaching $\lambda$ to each period's capacity constraint in~\eqref{eq:network_problem} and rearranging:
\begin{align}
    L(\lambda) &= \max_{\{a_{\ell,t}\}} \;\; \sum_{\ell=1}^{L} \mathbb{E}\!\left[\sum_{t=1}^{T} r_\ell(x_{\ell,t},\, s_{\ell,t},\, a_{\ell,t})\right] + \lambda \mathbb{E} \left[\sum_{t=1}^{T}\!\left(N_t - \sum_{\ell=1}^{L} a_{\ell,t}\right)\right] \nonumber \\
    &= \max_{\{a_{\ell,t}\}} \;\; \sum_{\ell=1}^{L} \mathbb{E}\!\left[\sum_{t=1}^{T} \bigl(r_\ell(x_{\ell,t},\, s_{\ell,t},\, a_{\ell,t}) - \lambda \, a_{\ell,t}\bigr)\right] + \lambda \sum_{t=1}^{T} N_t \nonumber \\
    &= \sum_{\ell=1}^{L} \underbrace{\max_{\{a_{\ell,t}\}} \;\; \mathbb{E}\!\left[\sum_{t=1}^{T} \bigl(r_\ell(x_{\ell,t},\, s_{\ell,t},\, a_{\ell,t}) - \lambda \, a_{\ell,t}\bigr)\right]}_{L_\ell(\lambda)} \;+\; \lambda \sum_{t=1}^{T} N_t.
    \label{eq:lagrangian}
\end{align}
Therefore, the penalty $-\lambda\,a_{\ell,t}$ is absorbed into each store's reward. The stores share no state or constraint and each maximization is independent, so the last term can be disregarded. Each per-store subproblem $L_\ell(\lambda)$ is a POMDP identical to the formulation in Section~\ref{sec:pomdp}, with a modified single-period reward:
\begin{equation}
    r_\ell^{\lambda}(x, s, a) = r_\ell(x, s, a) - \lambda \, a,
    \label{eq:reward_lambda}
\end{equation}
which replaces the driver wage $c_a$ with the augmented cost $c_a + \lambda$. That is, the single-store methodology is unchanged apart from the higher per-driver cost. As $\lambda$ increases, drivers become more expensive and the store's optimal policy becomes more conservative, requesting fewer drivers. As $\lambda$ decreases toward zero, the store behaves as if drivers were free beyond their base wage and requests the optimal number of drivers obtained in Section \ref{sec:pomdp}.

By solving the single-store POMDP at different values of $\lambda$, we can trace out how each store's optimal driver count $a^*(\lambda)$ responds to the shadow price. For a given state $(s, \bvec{b})$, the POMDP might prescribe $a^* = 5$ drivers when $\lambda = 0$ but only $a^* = 3$ when $\lambda = 20$. The values of $\lambda$ at which $a^*$ drops by one driver are precisely the breakpoints that define the priority index, as we formalize below. From this perspective, $\lambda$ is not a quantity we need to compute for the final policy. Rather, it is a parameter we vary to extract each store's willingness to pay for each marginal driver.

\subsection{Indexability}
\label{sec:indexability}

The Whittle-index policy \citep{whittle1988restless} is, at heart, a \emph{greedy} heuristic. It assigns each marginal driver-slot a scalar index and fills the pool top-down. Greedy allocation is generally suboptimal under coupling constraints. Yet, if the single-store subproblem is \emph{indexable}, then this greedy ranking rule is exactly optimal for the Lagrangian-relaxed problem~\eqref{eq:lagrangian}, and its suboptimality on the original constrained problem~\eqref{eq:network_problem} is controlled by the duality gap. The relaxation~\eqref{eq:lagrangian} decouples across stores, so at any price $\lambda$ the relaxed optimum has each store choose $a^*_\ell(\lambda)$ independently. Ranking all marginal drivers by their breakpoints and filling the pool down to the clearing price therefore reproduces these per-store optima, which is exactly the optimum of the relaxation \citep{whittle1988restless, weber1990index}.

Formally, the single-store problem is indexable if, for every state $(s, \bvec{b})$, the optimal driver count $a^*(\lambda)$ is non-increasing in the shadow price $\lambda$. This is the natural economic requirement that a store should never request \emph{more} drivers when they become more expensive. If indexability holds, then as we sweep $\lambda$ upward, the optimal action decreases monotonically, yielding a unique breakpoint $\lambda$ at which the store gives up its next driver. This breakpoint is the index.

\begin{restatable}[Indexability of the Single-Store POMDP]{theorem}{thmIndexability}
\label{prop:indexability}
The single-store POMDP with reward~\eqref{eq:reward_lambda} is indexable. That is, for every state $(s, \bvec{b})$, the optimal driver count $a^*(\lambda)$ is non-increasing in the shadow price $\lambda$.
\end{restatable}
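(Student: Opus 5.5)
The plan is to prove that the marginal value of an extra driver, $\Delta_t^\lambda(s,\bvec{b},a) := Q_t^\lambda(s,\bvec{b},a+1)-Q_t^\lambda(s,\bvec{b},a)$, is non-increasing in $\lambda$. Here $Q_t^\lambda$ denotes the $Q$-function~\eqref{eq:qfunction} built from the penalized reward~\eqref{eq:reward_lambda}. Once $Q_t^\lambda$ has decreasing differences in $(a,\lambda)$, Topkis's theorem with the smallest-argmax convention gives that $a_t^*(s,\bvec{b};\lambda)$ is non-increasing in $\lambda$, which is the statement.

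The first step is to note that replacing $c_a$ by $c_a+\lambda$ leaves (A1)--(A4) intact, since none of them involves $c_a$. Theorem~\ref{prop:backlog_monotone} therefore holds at every $\lambda\ge 0$. In particular:
\begin{itemize}
\item the optimal policy is non-decreasing in $s$;
\item it obeys the bounded-increase property $a^*_t(s+v,\bvec{b};\lambda)\le a^*_t(s,\bvec{b};\lambda)+1$.
\end{itemize}
Write the continuation values as $V^\lambda_{t+1}(s',\bvec{b}')$. Each is a maximum over finitely many deterministic Markov policies of functions affine in $\lambda$, with slope minus the expected cumulative number of drivers that policy deploys. Hence $V^\lambda_{t+1}$ is convex and piecewise linear in $\lambda$. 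Its right derivative equals $-D^\lambda_{t+1}(s',\bvec{b}')$, the expected future driver count under the smallest-argmax optimal policy, by Danskin's theorem applied to the policy that is optimal on the right of $\lambda$.

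Differentiating $\Delta_t^\lambda$ from the right then gives
\begin{equation*}
\partial_\lambda^+ \Delta_t^\lambda(s,\bvec{b},a) = -1 + \mathbb{E}_{\bvec{b}}\bigl[D^\lambda_{t+1}(s^{\mathrm{hi}},\bvec{b}_{t+1}(x)) - D^\lambda_{t+1}(s^{\mathrm{lo}},\bvec{b}_{t+1}(x))\bigr],
\end{equation*}
where $s^{\mathrm{hi}}=\max\{x+s-va,0\}$ and $s^{\mathrm{lo}}=\max\{x+s-v(a+1),0\}$. Because of (A3), both branches see the same next belief $\bvec{b}_{t+1}(x)$. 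It therefore suffices to show the following claim: from any common belief and backlogs $s'\le s''\le s'+v$, the optimal policy started at $s''$ uses at most one more driver in total than the one started at $s'$.

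I will prove the claim by a sample-path coupling. Run both trajectories on the same demand realizations and hence the same beliefs, using (A3) again. Keep the backlog gap $\delta_\tau\in[0,v]$ as the invariant. By Theorem~\ref{prop:backlog_monotone}(ii)--(iii), the action gap $a^*(s''_\tau)-a^*(s'_\tau)$ lies in $\{0,1\}$. Now use the map $e\mapsto\max\{e-va,0\}$:
\begin{itemize}
\item If the gap is $0$, the new backlog gap is at most $\delta_\tau$.
\item If the gap is $1$, the new backlog gap is at most $\max\{\delta_\tau-v,0\}=0$. The trajectories then merge and use identical actions from that point on.
\end{itemize}
So the extra driver is spent at most once, the claim holds, the bracketed expectation is at most $1$, and $\partial^+_\lambda\Delta^\lambda_t\le 0$. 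Integrating the right derivative of this continuous, piecewise-linear function yields monotonicity of $\Delta_t^\lambda$ in $\lambda$.

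The main obstacle is the derivative step at breakpoints of $\lambda$ where the argmax is not unique. I will handle it by fixing the smallest-argmax optimal policy on a right-neighbourhood of $\lambda$ and applying the coupling to that policy, so no smoothness is needed. If this becomes delicate, the fallback is a discrete version: compare $\Delta^{\lambda_2}-\Delta^{\lambda_1}$ using the policies optimal at $\lambda_1$ and $\lambda_2$ as suboptimal witnesses. A further point to check is the truncation $s\le s_{\max}$ together with the terminal condition $V_{T+1}=-c_{\mathrm{lost}}s$. The terminal value does not depend on $\lambda$, so $D_{T+1}\equiv 0$. Capping at $s_{\max}$ only shrinks backlog gaps, so it preserves the invariant $\delta_\tau\in[0,v]$.
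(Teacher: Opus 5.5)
\paragraph{Where the proposal fails.} Your derivative step matches the paper's Lemma~\ref{lem:reduction}: the right derivative of $\Delta_t^\lambda$ is $-1+\mathbb{E}[D^\lambda_{t+1}(s^{\mathrm{hi}})-D^\lambda_{t+1}(s^{\mathrm{lo}})]$, (A3) gives a common next belief, and Topkis finishes. The gap is in the coupling you use to bound the bracket.

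When the action gap is $1$, the new backlog gap is $\max\{y+\delta_\tau-v,0\}-\max\{y,0\}$ with $y:=x+s'_\tau-v\,a^*(s'_\tau)$. This lies in $[\delta_\tau-v,\,0]$. Your upper bound of $0$ is correct. But the gap is strictly negative whenever $y>0$ and $\delta_\tau<v$, so the trajectories do not merge: their order flips. For example, take $v=2$, $s'_\tau=3$, $s''_\tau=4$, $a^*(3)=2$, $a^*(4)=3$ (compatible with Theorem~\ref{prop:backlog_monotone}(ii)--(iii)) and $x=5$. The next backlogs are $4$ and $3$.

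Gaps strictly between $0$ and $v$ cannot be avoided. The gap $s^{\mathrm{hi}}-s^{\mathrm{lo}}$ can take any value in $[0,v]$, and every equal-action step shrinks the gap. After a flip, the trajectory that started lower is now the higher one and may itself hire an extra driver later. So your invariant $\delta_\tau\in[0,v]$ fails, and ``the extra driver is spent at most once'' is not established by what you wrote.

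\paragraph{How to repair it.} The claim is true and the fix is short. Track the signed gap $g_\tau:=s''_\tau-s'_\tau\in[-v,v]$ and apply (ii)--(iii) to whichever trajectory is currently higher.
\begin{itemize}
\item The action difference $a^*(s''_\tau)-a^*(s'_\tau)$ lies in $\{0,1\}$ when $g_\tau>0$ and in $\{-1,0\}$ when $g_\tau<0$.
\item A difference of $+1$ sends the gap into $[g_\tau-v,0]$; a difference of $-1$ sends it into $[0,g_\tau+v]$.
\item A zero difference sends it into $[0,g_\tau]$ or $[g_\tau,0]$, and $g_\tau=0$ means the paths have merged.
\end{itemize}
Hence the nonzero action differences alternate $+1,-1,+1,\ldots$, and the cumulative difference stays in $\{0,1\}$ on every sample path. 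This gives $0\le D^\lambda_{t+1}(s'')-D^\lambda_{t+1}(s')\le 1$.

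\paragraph{Comparison with the paper.} The paper reaches the same bound without a pathwise coupling. It proves by backward induction that $D_t$ is non-decreasing in $s$ and that $D_t(s+v)-D_t(s)\le 1$. It uses the one-step decomposition $D_t(\sigma)-D_t(s)=[a^*_t(\sigma)-\bar a]+\mathbb{E}_x[D_{t+1}(s'_\sigma)-D_{t+1}(s'_s)]$ for $\sigma\in\{s+1,s+v\}$.

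Carrying the two properties together is what absorbs the order flip. The flipped configuration is the case $a^*_t(s+1)=\bar a+1$ in the proof of (a), where the continuation term is bounded below by $-1$ using (b) at $t+1$. In turn, (b) for gaps smaller than $v$ relies on (a). Your corrected coupling delivers both properties in one pass, while the paper's induction avoids reasoning about whole paths.
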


Proof in \ref{app:proofs}.

Theorem~\ref{prop:indexability} is the structural foundation of the Whittle index policy. We establish indexability by bounding $D_t(s, \bvec{b}; \lambda) = \mathbb{E}\bigl[\sum_{\tau=t}^{T} a^*_\tau\bigr]$, the expected total driver-hours consumed under the optimal policy from period $t$ onward. The argument proceeds by backward induction on the cumulative driver-hours functional $D_t$, using parts (ii) and (iii) of Theorem~\ref{prop:backlog_monotone} to bound how the optimal action moves under a unit and a $v$-unit shift in backlog. The proof leans on the exogenous-regime structure (A3). Because staffing does not move the demand regime, the belief path is action-independent and comparisons across shadow prices reduce to the backlog dimension, which  Theorem~\ref{prop:backlog_monotone} controls. When the action drives the latent state \citep{MeshramKaza2025}, this reduction is unavailable and indexability remains open.

The economic intuition behind the bounded $v$-step is that one order consumes at most $1/v$ of a driver-period if it is served, and zero if eventually dropped. Because an order cannot be served more than once, an additional $v$ units of backlog can never require more than one additional driver-period over the remaining horizon.

\begin{remark}
The Lagrangian relaxation also provides an upper bound on the optimal value of the network problem~\eqref{eq:network_problem}. Define the optimal global multiplier as
\begin{equation}
    \lambda^* = \arg\min_{\lambda \geq 0} \; L(\lambda).
    \label{eq:dual}
\end{equation}
Since $L(\lambda)$ is convex in $\lambda$ (as the pointwise maximum of affine functions), this can be solved by bisection or subgradient methods. The dual value satisfies $L(\lambda^*) \geq$ the optimal value of~\eqref{eq:network_problem}, so the gap $L(\lambda^*) - V^{\text{policy}}$ for any feasible policy bounds its suboptimality. This bound is useful for evaluating the quality of the index policy but plays no role in the allocation procedure itself.
\end{remark}

\subsection{Priority Index and Online Allocation}

Indexability guarantees that for each store $\ell$ in each state $(s_{\ell,t}, \bvec{b}_{\ell,t})$, the optimal action $a^*_\ell(\lambda)$ is a non-increasing staircase in $\lambda$. Because the action is integer-valued and a store may request multiple drivers, this staircase may have several steps. We define a \emph{driver priority index} for each marginal driver: the $j$-th index of store $\ell$ is the breakpoint
\begin{equation}
    \Lambda_\ell^{(j)}(s_{\ell,t}, \bvec{b}_{\ell,t}) = \inf\bigl\{\lambda \geq 0 : a^*_\ell(\lambda) < j\bigr\}, \quad j = 1, \ldots, a^*_\ell(0),
    \label{eq:index}
\end{equation}
that is, the maximum price per driver at which store $\ell$ still demands at least $j$ drivers. By indexability, $\Lambda_\ell^{(1)} \geq \Lambda_\ell^{(2)} \geq \cdots$, so each successive driver is valued less than the previous one. Intuitively, $\Lambda_\ell^{(j)}$ is the willingness to pay for the $j$-th driver. Stores with high backlog, or whose beliefs concentrate on a high-demand regime, will have high indices. The response of the index to backlog is a consequence of the structural results of Section \ref{sec:structural}, which carry over to the Lagrangian-augmented problem at every shadow price.

\begin{restatable}[]{corollary}{corIndexMono}
\label{cor:index_monotone}
For every store $\ell$, period $t$, belief $\bvec{b} \in \Delta^{K-1}$, and driver rank $j$, the index $\Lambda_\ell^{(j)}(s, \bvec{b})$ is non-decreasing in the backlog $s$. Moreover, $\Lambda_\ell^{(j+1)}(s+v, \bvec{b}) \le \Lambda_\ell^{(j)}(s, \bvec{b})$ for every $s \ge 0$ and every rank $j$.
\end{restatable}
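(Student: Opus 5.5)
The plan is to reduce both claims to Theorem~\ref{prop:backlog_monotone}, applied to the $\lambda$-augmented problem, and then translate statements about $a^*_\ell(\lambda)$ into statements about its breakpoints. First I would note that the reward \eqref{eq:reward_lambda} is the original reward with $c_a$ replaced by $c_a+\lambda$. Conditions (A1)--(A4) do not involve $c_a$: they concern $q=p-c_p$, $c_b$, the action-independence of $\bvec{T}$, and $a_{\max}$. The positive-margin condition $vq-c_a>0$ may fail for large $\lambda$. So I would check that the proof of Theorem~\ref{prop:backlog_monotone} uses only (A1)--(A4) and $c_a\ge 0$, and not the driver-level margin. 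The translation identity exchanges $v$ backlog units for one driver at a fixed constant, so the value of $c_a$ should only enter that constant. The conclusion of this step is that, for every $\lambda\ge 0$, every $t$ and every $\bvec{b}$, the smallest optimal action $a^*_t(s,\bvec{b};\lambda)$ is non-decreasing in $s$ and satisfies $a^*_t(s+v,\bvec{b};\lambda)\le a^*_t(s,\bvec{b};\lambda)+1$.

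Next I would rewrite the index as a level set. By Theorem~\ref{prop:indexability}, $\lambda\mapsto a^*(\lambda)$ is non-increasing, so $\{\lambda\ge0: a^*_\ell(\lambda)<j\}$ is an up-set, that is, an interval $[\Lambda,\infty)$ or $(\Lambda,\infty)$. Its infimum is $\Lambda_\ell^{(j)}$. For ranks $j>a^*_\ell(0)$ I would adopt the convention $\Lambda^{(j)}=0$, which is consistent with the infimum formula, so the inequalities can be stated for every $j$.

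For the first claim, monotonicity in $s$, I would use set inclusion. If $a^*(s+1,\bvec{b};\lambda)<j$, then by part (ii) $a^*(s,\bvec{b};\lambda)\le a^*(s+1,\bvec{b};\lambda)<j$. Hence the level set at $s+1$ is contained in the level set at $s$, so its infimum is at least as large: $\Lambda^{(j)}(s+1,\bvec{b})\ge\Lambda^{(j)}(s,\bvec{b})$.

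For the second claim I would use the bounded increase in part (iii). If $a^*(s,\bvec{b};\lambda)<j$, meaning $a^*(s,\bvec{b};\lambda)\le j-1$, then $a^*(s+v,\bvec{b};\lambda)\le j<j+1$. So the set $\{\lambda: a^*(s,\lambda)<j\}$ is contained in $\{\lambda: a^*(s+v,\lambda)<j+1\}$. Taking infima gives $\Lambda^{(j+1)}(s+v,\bvec{b})\le\Lambda^{(j)}(s,\bvec{b})$. If $j$ exceeds $a^*(s+v,0)-1$, the convention makes the claim trivial.

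The main obstacle is the first step: confirming that Theorem~\ref{prop:backlog_monotone}, especially part (iii), survives for arbitrary $\lambda$, including prices at which staffing is unprofitable and the optimal action may sit at $a_{\min}$. The translation identity shifts $Q$ by a constant involving $c_a+\lambda$, and that identity is insensitive to the wage. Where the proof needs care is the boundary $a=a_{\min}$, where shifting the action down by one is not available. I would handle this case directly: at $a_{\min}$ the smallest-argmax convention gives $a^*\ge a_{\min}$ trivially, so the required inequalities still hold.
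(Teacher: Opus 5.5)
Your proposal is correct and follows the same route as the paper. The paper also transfers parts (ii) and (iii) of Theorem~\ref{prop:backlog_monotone} to the $\lambda$-augmented problem, via Lemma~\ref{lem:translation_aug} and the observation that (A1)--(A4) do not involve $\lambda$, and then gets both inequalities from the same level-set inclusions by taking infima. Two of your extra steps are harmless but unnecessary: the boundary case $a_{\min}$ is already handled inside the proof of part (iii), which only needs a positive per-driver cost $c_a+\lambda$, and the infimum argument does not need indexability to describe the level sets.
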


Proof in \ref{app:proofs}. 

Congestion therefore raises a store's bid for every marginal driver, so the ranking responds to backlog in the direction the allocation requires. The second inequality bounds the rise. Since one driver clears $v$ orders per hour, $v$ additional backlogged orders never lift the store's bid for its next marginal driver above its current bid for the previous one, so a congested store climbs the ranking gradually rather than seizing the pool.

The indices are computed offline. For each store $\ell$, we solve the single-store POMDP for a grid of $\lambda$ values $0 = \lambda_0 < \lambda_1 < \cdots < \lambda_G$ and record $a^*(\lambda_i)$ at each state $(s, \bvec{b})$. The indices $\Lambda_\ell^{(j)}(s, \bvec{b})$ are the breakpoints where the optimal action drops below $j$, interpolated between adjacent grid values.

Figure \ref{fig:indexability} illustrates this structure empirically on three stores in the same city (City E). At a representative snapshot (Sunday 13:00, stationary belief), the optimal staffing $a^*(\lambda)$ is a non-increasing staircase in the shadow price for every store, consistent with Theorem~\ref{prop:indexability}. Each downward step is a breakpoint $\Lambda^{(j)}$ in the sense of Equation~\eqref{eq:index}. Panel \ref{fig:indexability_b} shows the same staircase at increasing backlog levels. A congested store's staircase shifts outward, so it out-bids its peers for every marginal driver, consistent with Corollary \ref{cor:index_monotone}.

\begin{figure}[htbp]
    \centering
    \begin{subfigure}[b]{0.45\textwidth}
        \centering
        \includegraphics[width=\textwidth]{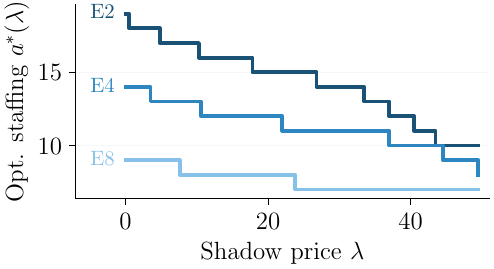}
        \caption{Three-store comparison (stationary belief, $s = 0$).}
        \label{fig:indexability_a}
    \end{subfigure}
    \hfill
    \begin{subfigure}[b]{0.45\textwidth}
        \centering
        \includegraphics[width=\textwidth]{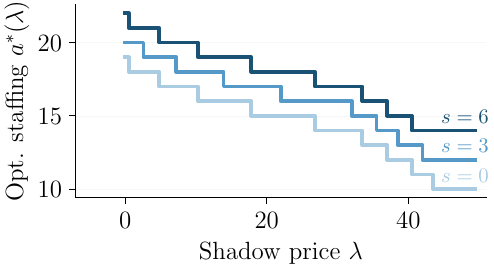}
        \caption{Store E2 (stationary belief, different backlogs)}
        \label{fig:indexability_b}
    \end{subfigure}
    \caption{Empirical indexability
    (Theorem~\ref{prop:indexability}) computed on Sunday 13:00.}
    \label{fig:indexability}
\end{figure}

% \begin{figure}[htbp]
%     \centering
%     \includegraphics[width=\textwidth]{figures/fig_indexability.pdf}
%     \caption{. (a)~
%     (b)~}
%     \label{fig:indexability}
% \end{figure}

These indices respond to the demand belief, making allocation anticipatory of demand surges. The indices of a store's first drivers are set by the participation margin, the price at which serving at all becomes worthwhile, and they stay uniformly high at every belief. The indices of the marginal drivers, by contrast, rise steeply as the belief shifts toward the high-demand regime. A store that believes a surge is coming raises its bid for extra capacity before any backlog materializes.

Given precomputed index tables, the online allocation each period proceeds as follows:
\begin{enumerate}[nosep]
    \item Each store $\ell$ reports its current state $(s_{\ell,t}, \bvec{b}_{\ell,t})$ and looks up its index values $\Lambda_\ell^{(1)}, \Lambda_\ell^{(2)}, \ldots$
    \item All driver-slots across all stores are pooled and ranked by index value in descending order.
    \item Drivers are allocated greedily down the ranked list until the pool of $N_t$ drivers is exhausted.
\end{enumerate}

This construction follows the same logic as the Whittle index for restless multi-armed bandits \citep{whittle1988restless}. In the classical Whittle framework, each ``arm'' (store) has a binary action (activate or rest), and the index is the subsidy for passivity that makes the arm indifferent. Our formulation generalizes this to integer-valued actions, producing multiple indices per store, one per marginal driver. In the binary-action case, \citet{weber1990index} show that the Whittle index policy is asymptotically optimal for the constrained problem (and not just the relaxed version) as the number of arms and the budget grow proportionally. Extending this result to multi-action arms remains an open question, and we leave this for future work.

To illustrate the full procedure, consider a network with three stores and $N_t = 6$ available drivers. In the precomputation phase, we solve each store's single-store POMDP repeatedly at different values of $\lambda$. Suppose that at the current period, the three stores are in states that produce the following index tables:

\begin{center}
\small
\begin{tabular}{lcccc}
\toprule
& $\Lambda^{(1)}$ & $\Lambda^{(2)}$ & $\Lambda^{(3)}$ & $\Lambda^{(4)}$ \\
\midrule
Store A & 80 & 50 & 20 & 5 \\
Store B & 70 & 35 & 10 & -- \\
Store C & 60 & 15 & -- & -- \\
\bottomrule
\end{tabular}
\end{center}

\noindent For example, Store~A's POMDP prescribes $a^* = 4$ drivers when $\lambda = 0$, but only $a^* = 3$ when $\lambda > 5$, $a^* = 2$ when $\lambda > 20$, and so on. Its fourth driver is worth only $\Lambda^{(4)} = 5$, while its first driver is worth $\Lambda^{(1)} = 80$. To allocate the $N_t = 6$ drivers, we pool all driver-slots across all stores and rank them by index value. Ranking all slots by index gives 80, 70, 60, 50, 35, 20, 15, 10, 5. The first six go to A’s 1st–3rd, B’s 1st–2nd, and C’s 1st, so Store A receives 3 drivers, Store B 2, and Store C 1. The implicit clearing price falls between 15 and 20, and A’s 4th and C’s 2nd drivers are priced out.

%%=========================================================================
\subsection{Multi-Store Illustrative Allocation}
\label{sec:multi_store_results}
%%=========================================================================

To illustrate this policy in operation, Figure \ref{fig:index_policy_demo} traces two consecutive test days for the three City~E stores of Section \ref{sec:indexability} (E2, E4, E8) sharing a pool of $N = 36$ drivers. Beliefs are filtered from realized demand exactly as in the single-store evaluation, and each hour the pool is allocated by priority index. The allocation is anticipatory rather than backlog-reactive as capacity follows each store's filtered belief hour by hour, before any queue has formed. The clearest signature is the pair E4 and E8. Their calendar baselines over these weekdays differ markedly (17.0 versus 12.2 orders per hour), yet realized demand over the two days shown was nearly identical (13.8 versus 14.3), and the index policy, tracking each store's filtered belief rather than the calendar ranking, allocated the two stores nearly identical driver totals while their hourly splits diverged whenever their beliefs did. The largest store, E2, draws the most drivers overall, but during the system-wide evening peak the shared pool binds and the policy triages by priority, shifting capacity toward whichever stores currently believe themselves closest to a surge.

\begin{figure}[htbp]
    \centering
    \includegraphics[width=0.9\textwidth]{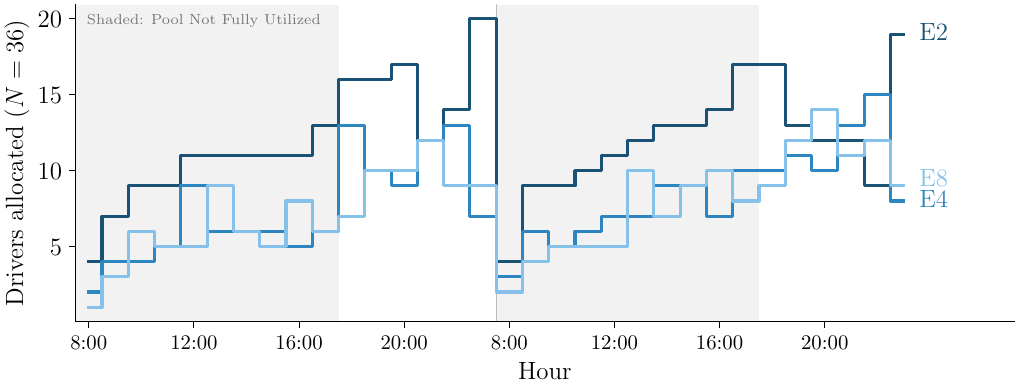}
    \caption{The index policy in action over two consecutive test days, with three stores ($E_2$, $E_4$, $E_8$) drawing on a shared pool of $N = 36$ drivers.}
    \label{fig:index_policy_demo}
\end{figure}

%%=========================================================================
\section{Conclusions}
\label{sec:conclusions}
%%=========================================================================

We studied driver staffing for quick-commerce dark stores and made two contributions. The first is a single-store POMDP in which the demand regime is latent and the value of tracking it in real time can be measured. The model is dynamic in two ways. It carries unfulfilled orders forward as backlog, so a backlog-aware dynamic program is needed to avoid the service failures that a single-period rule lets compound. It also keeps a belief over the demand regime and updates it each hour. On a 27-store panel, real-time belief updating adds 10.9\% over the same dynamic program with a fixed belief, or about \$188K. The gain is not spread evenly. It falls on the stores whose regimes are most persistent, where it can reach 49\% of a store's reward. Because persistence can be measured at estimation time, the firm can tell in advance which stores reward real-time learning.

We then prove that the single-store problem is indexable, so as the shadow price on a driver rises the optimal number of drivers never rises and each marginal driver has a well-defined priority index, one that rises with the store’s backlog. Indexability turns the network problem into a ranking rule that pools the shared fleet, ranks every store's marginal driver by its index, and fills the pool from the top down. We demonstrate this index policy on a shared fleet, where it moves capacity toward the stores with the highest current need before any backlog forms. The greedy allocation is optimal for the Lagrangian relaxation of the network problem, and the duality gap bounds how far it can fall short on the constrained problem. How tight that gap is in practice, and whether the asymptotic optimality known for binary-action restless bandits carries over to many actions, are questions we leave for future work.

\textit{Statement: The authors used Claude to document and revise code, and to improve writing presentation. The authors reviewed and edited the content and take full responsibility for the content of the article.}

{\setlength{\bibsep}{1pt plus 0.4pt}\renewcommand{\bibfont}{}
\bibliographystyle{apalike}
\bibliography{bibliography}}

@article{treharne2002adaptive,
  title={Adaptive inventory control for nonstationary demand and partial information},
  author={Treharne, James T and Sox, Charles R},
  journal={Management Science},
  volume={48},
  number={5},
  pages={607--624},
  year={2002},
  publisher={INFORMS}
}

@misc{europarlGigEconomy,
  title        = {Gig economy: how the EU improves platform workers' rights},
  howpublished = {\url{https://www.europarl.europa.eu/topics/en/article/20190404STO35070/gig-economy-how-the-eu-improves-platform-workers-rights}},
  note         = {Accessed: 2025-03-26},
  author       = {{European Parliament}},
  year         = {2019}
}

@misc{globenewswire2022,
  author = "GlobeNewswire",
  title = "Global Online Grocery Market 2022 to 2030 - Size, Share, Trends, Analysis Report",
  year = 2022,
  howpublished = "\url{https://www.globenewswire.com/en/news-release/2022/06/13/2461011/28124/en/Global-Online-Grocery-Market-2022-to-2030-Size-Share-Trends-Analysis-Report.html}",
  note = "[Accessed: 26-June-2023]"
}

@article{rabiner1989tutorial,
  title={A tutorial on hidden Markov models and selected applications in speech recognition},
  author={Rabiner, Lawrence R},
  journal={Proceedings of the IEEE},
  volume={77},
  number={2},
  pages={257--286},
  year={1989},
  publisher={IEEE}
}

@article{olbrys2022approximate,
  title={Approximate entropy and sample entropy algorithms in financial time series analyses},
  author={Olbrys, Joanna and Majewska, Elzbieta},
  journal={Procedia Computer Science},
  volume={207},
  pages={255--264},
  year={2022},
  publisher={Elsevier}
}

@article{LIANG2026100240,
title = {A Markov decision process framework for order dispatching in on-demand delivery services},
journal = {Multimodal Transportation},
volume = {5},
number = {1},
pages = {100240},
year = {2026},
issn = {2772-5863},
doi = {https://doi.org/10.1016/j.multra.2025.100240},
url = {https://www.sciencedirect.com/science/article/pii/S2772586325000541},
author = {Jian Liang and Jintao Ke}
}

@article{YildizSavelsbergh2019,
title = {Service and capacity planning in crowd-sourced delivery},
journal = {Transportation Research Part C: Emerging Technologies},
volume = {100},
pages = {177-199},
year = {2019},
issn = {0968-090X},
doi = {https://doi.org/10.1016/j.trc.2019.01.021},
url = {https://www.sciencedirect.com/science/article/pii/S0968090X18311513},
author = {Baris Yildiz and Martin Savelsbergh}
}

@article{DaiLiu2020,
  author  = {Dai, Hongyan and Liu, Peng},
  title   = {Workforce planning for O2O delivery systems with crowdsourced drivers},
  journal = {Annals of Operations Research},
  year    = {2020},
  volume  = {291},
  number  = {1},
  pages   = {219--245},
  month   = {aug},
  doi     = {10.1007/s10479-019-03135-z},
  url     = {https://doi.org/10.1007/s10479-019-03135-z},
  issn    = {1572-9338}}

@article{UlmerThomasMattfeld2019,
title = {Preemptive depot returns for dynamic same-day delivery},
journal = {EURO Journal on Transportation and Logistics},
volume = {8},
number = {4},
pages = {327-361},
year = {2019},
issn = {2192-4376},
doi = {https://doi.org/10.1007/s13676-018-0124-0},
url = {https://www.sciencedirect.com/science/article/pii/S2192437620300479},
author = {Marlin W. Ulmer and Barrett W. Thomas and Dirk C. Mattfeld}
}

@article{UlmerThomasCampbellWoyak2021,
author = {Ulmer, Marlin W. and Thomas, Barrett W. and Campbell, Ann Melissa and Woyak, Nicholas},
title = {The Restaurant Meal Delivery Problem: Dynamic Pickup and Delivery with Deadlines and Random Ready Times},
journal = {Transportation Science},
volume = {55},
number = {1},
pages = {75-100},
year = {2021},
doi = {10.1287/trsc.2020.1000}}

@article{ZehtabianLarsenWohlk2022,
title = {Estimation of the arrival time of deliveries by occasional drivers in a crowd-shipping setting},
journal = {European Journal of Operational Research},
volume = {303},
number = {2},
pages = {616-632},
year = {2022},
issn = {0377-2217},
doi = {https://doi.org/10.1016/j.ejor.2022.02.050},
url = {https://www.sciencedirect.com/science/article/pii/S0377221722001710},
author = {Shohre Zehtabian and Christian Larsen and Sanne Wøhlk}
}

@article{SavelsberghUlmer2024,
  author  = {Savelsbergh, Martin and Ulmer, Marlin W.},
  title   = {Challenges and opportunities in crowdsourced delivery planning and operations—an update},
  journal = {Annals of Operations Research},
  year    = {2024},
  volume  = {343},
  pages   = {639--661},
  doi     = {10.1007/s10479-024-06249-1},
  publisher = {Springer}
}

@article{BehrendtBehrendtWang2024,
title = {Task assignment, pricing, and capacity planning for a hybrid fleet of centralized and decentralized couriers},
journal = {Transportation Research Part C: Emerging Technologies},
volume = {160},
pages = {104533},
year = {2024},
issn = {0968-090X},
doi = {https://doi.org/10.1016/j.trc.2024.104533},
url = {https://www.sciencedirect.com/science/article/pii/S0968090X24000548},
author = {Adam Behrendt and Martin Behrendt and He Wang}
}

@misc{MeshramKaza2025,
  title={Lagrangian Relaxation for Multi-Action Partially Observable Restless Bandits: Heuristic Policies and Indexability},
  author={Meshram, Rahul and Kaza, Kesav},
  year={2025},
  howpublished={arXiv preprint arXiv:2509.00415}
}

@article{ArslanAgatzKroonZuidwijk2019,
  author = {Alp M. Arslan and Niels Agatz and Leo Kroon and Rob Zuidwijk},
  title = {Crowdsourced Delivery—A Dynamic Pickup and Delivery Problem with Ad Hoc Drivers},
  journal = {Transportation Science},
  volume = {53},
  number = {1},
  pages = {222--235},
  year = {2019},
  doi = {10.1287/trsc.2017.0803}
}

@article{AfecheLiuMaglaras2023,
    author = {Philipp Af{\`e}che and Zhe Liu and Costis Maglaras},
    title = {Ride-Hailing Networks with Strategic Drivers: The Impact of Platform Control Capabilities on Performance},
    journal = {Manufacturing \& Service Operations Management},
    volume = {25},
    number = {5},
    pages = {1890--1908},
    year = {2023},
    doi = {10.1287/msom.2023.1221}
  }

@article{weber1990index,
    author = {Weber, Richard R. and Weiss, Gideon},
    title = {On an Index Policy for Restless Bandits},
    journal = {Journal of Applied Probability},
    volume = {27},
    number = {3},
    pages = {637--648},
    year = {1990}
  }

@article{whittle1988restless,
  title={Restless bandits: Activity allocation in a changing world},
  author={Whittle, Peter},
  journal={Journal of applied probability},
  volume={25},
  number={A},
  pages={287--298},
  year={1988},
  publisher={Cambridge University Press}
}

@article{sondik1978optimal,
  title={The optimal control of partially observable Markov processes over the infinite horizon: Discounted costs},
  author={Sondik, Edward J},
  journal={Operations research},
  volume={26},
  number={2},
  pages={282--304},
  year={1978},
  publisher={INFORMS}
}

@misc{medianama_10min_rollback,
    author       = {{MediaNama}},
    title        = {Why the 10-Minute Delivery Model Is No Longer Feasible},
    year         = {2026},
    howpublished = {\url{https://www.medianama.com/2026/01/223-rollback-of-10-minute-delivery-quick-commerce/}},
    note         = {Accessed June 2026}
  }

@misc{mckinsey_ecommerce_delivery,
    author       = {{McKinsey \& Company}},
    title        = {What Do {US} Consumers Want from E-Commerce Deliveries?},
    year         = {2021},
    howpublished = {\url{https://www.mckinsey.com/industries/logistics/our-insights/what-do-us-consumers-want-from-e-commerce-deliveries}},
    note         = {Accessed June 2026}
  }

@article{SongZipkin1993,
  author = {Song, Jing-Sheng and Zipkin, Paul},
  title = {Inventory Control in a Fluctuating Demand Environment},
  journal = {Operations Research},
  volume = {41},
  number = {2},
  pages = {351--370},
  year = {1993},
  doi = {10.1287/opre.41.2.351}
}

@article{LariviereePorteus1999,
  author = {Lariviere, Martin A. and Porteus, Evan L.},
  title = {Stalking Information: {Bayesian} Inventory Management with Unobserved Lost Sales},
  journal = {Management Science},
  volume = {45},
  number = {3},
  pages = {346--363},
  year = {1999},
  doi = {10.1287/mnsc.45.3.346}
}

@phdthesis{Hawkins2003,
  author = {Hawkins, Jeffrey T.},
  title = {A Lagrangian Decomposition Approach to Weakly Coupled Dynamic Optimization Problems and Its Applications},
  school = {Massachusetts Institute of Technology},
  year = {2003}
}

@article{HodgeGlazebrook2015,
  author = {Hodge, David J. and Glazebrook, Kevin D.},
  title = {On the Asymptotic Optimality of Greedy Index Heuristics for Multi-Action Restless Bandits},
  journal = {Advances in Applied Probability},
  volume = {47},
  number = {3},
  pages = {652--667},
  year = {2015},
  doi = {10.1239/aap/1444308876}
}

@article{Scarf1959,
  author = {Scarf, Herbert},
  title = {Bayes Solutions of the Statistical Inventory Problem},
  journal = {The Annals of Mathematical Statistics},
  volume = {30},
  number = {2},
  pages = {490--508},
  year = {1959}
}

@article{Azoury1985,
  author = {Azoury, Katy S.},
  title = {Bayes Solution to Dynamic Inventory Models under Unknown Demand Distribution},
  journal = {Management Science},
  volume = {31},
  number = {9},
  pages = {1150--1160},
  year = {1985},
  doi = {10.1287/mnsc.31.9.1150}
}

@article{Lovejoy1987,
  author = {Lovejoy, William S.},
  title = {Some Monotonicity Results for Partially Observed Markov Decision Processes},
  journal = {Operations Research},
  volume = {35},
  number = {5},
  pages = {736--743},
  year = {1987},
  doi = {10.1287/opre.35.5.736}
}

@article{PapadimitriouTsitsiklis1999,
  author = {Papadimitriou, Christos H. and Tsitsiklis, John N.},
  title = {The Complexity of Optimal Queuing Network Control},
  journal = {Mathematics of Operations Research},
  volume = {24},
  number = {2},
  pages = {293--305},
  year = {1999},
  doi = {10.1287/moor.24.2.293}
}

@article{NinoMora2001,
  author = {Ni{\~n}o-Mora, Jos{\'e}},
  title = {Restless Bandits, Partial Conservation Laws and Indexability},
  journal = {Advances in Applied Probability},
  volume = {33},
  number = {1},
  pages = {76--98},
  year = {2001}
}

@article{GansKooleMandelbaum2003,
  author = {Gans, Noah and Koole, Ger and Mandelbaum, Avishai},
  title = {Telephone Call Centers: Tutorial, Review, and Research Prospects},
  journal = {Manufacturing \& Service Operations Management},
  volume = {5},
  number = {2},
  pages = {79--141},
  year = {2003},
  doi = {10.1287/msom.5.2.79.16071}
}

@article{Taylor2018,
  author = {Taylor, Terry A.},
  title = {On-Demand Service Platforms},
  journal = {Manufacturing \& Service Operations Management},
  volume = {20},
  number = {4},
  pages = {704--720},
  year = {2018}
}

@article{CachonDanielsLobel2017,
  author = {Cachon, G{\'e}rard P. and Daniels, Kaitlin M. and Lobel, Ruben},
  title = {The Role of Surge Pricing on a Service Platform with Self-Scheduling Capacity},
  journal = {Manufacturing \& Service Operations Management},
  volume = {19},
  number = {3},
  pages = {368--384},
  year = {2017},
  doi = {10.1287/msom.2017.0618}
}

@article{baum1970maximization,
  author = {Baum, Leonard E. and Petrie, Ted and Soules, George and Weiss, Norman},
  title = {A Maximization Technique Occurring in the Statistical Analysis of Probabilistic Functions of {Markov} Chains},
  journal = {The Annals of Mathematical Statistics},
  volume = {41},
  number = {1},
  pages = {164--171},
  year = {1970}
}

@book{topkis1998supermodularity,
  author = {Topkis, Donald M.},
  title = {Supermodularity and Complementarity},
  publisher = {Princeton University Press},
  year = {1998}
}

@misc{eu_platform_work_directive_2024,
  author = {{European Union}},
  title = {Directive ({EU}) 2024/2831 of the {European} {Parliament} and of the {Council} on Improving Working Conditions in Platform Work},
  year = {2024},
  howpublished = {Official Journal of the European Union},
  note = {\url{https://eur-lex.europa.eu/eli/dir/2024/2831/oj/eng}}
}

@misc{handelsblatt_qcommerce_labor,
  author = {{Handelsblatt}},
  title = {Lieferdienste: Arbeitsbedingungen bei {Gorillas}, {Flink} und {Getir} werden zum Wettbewerbsfaktor},
  year = {2021},
  howpublished = {\url{https://www.handelsblatt.com/unternehmen/handel-konsumgueter/lieferdienste-proteste-der-gorillas-kuriere-wirken-arbeitsbedingungen-werden-zum-wettbewerbsfaktor/27515512.html}},
  note = {Accessed June 2026}
}

@misc{bhrrc_glovo_employees,
  author = {{Business and Human Rights Resource Centre}},
  title = {Spain: {Glovo} Riders in {Spain} to Become Employees},
  year = {2024},
  howpublished = {\url{https://www.business-humanrights.org/en/latest-news/espagne-glovo-a-salarier-ses-livreurs-a-domicile/}},
  note = {Accessed June 2026}
}

@article{MilgromSegal2002,
  title={Envelope theorems for arbitrary choice sets},
  author={Milgrom, Paul and Segal, Ilya},
  journal={Econometrica},
  volume={70},
  number={2},
  pages={583--601},
  year={2002},
  publisher={Wiley Online Library}
}

@article{VieiraMendonca2025,
  author  = {Vieira, Tiago and Mendon\c{c}a, Pedro},
  title   = {The Times, Are They Changing? {E}xamining Platform Companies' Chameleonic
             Labour Process as a Response to the {Spanish} {Ley} {Rider}},
  journal = {Socio-Economic Review},
  volume  = {23},
  number  = {2},
  pages   = {877--898},
  year    = {2025},
  doi     = {10.1093/ser/mwae066}
}

@misc{Aranguiz2021,
  author       = {Aranguiz, Ane},
  title        = {Platforms Put a Spoke in the Wheels of {Spain's} `Riders' Law'},
  howpublished = {Social Europe},
  year         = {2021},
  note         = {2 September 2021},
  url          = {https://www.socialeurope.eu/platforms-put-a-spoke-in-the-wheels-of-spains-riders-law}
}

@article{SteinkerHobergThonemann2017,
  author  = {Steinker, Sebastian and Hoberg, Kai and Thonemann, Ulrich W.},
  title   = {The Value of Weather Information for E-Commerce Operations},
  journal = {Production and Operations Management},
  volume  = {26},
  number  = {10},
  pages   = {1854--1874},
  year    = {2017},
  doi     = {10.1111/poms.12721}
}

@article{BadorfHoberg2020,
  author  = {Badorf, Florian and Hoberg, Kai},
  title   = {The Impact of Daily Weather on Retail Sales: An Empirical Study in Brick-and-Mortar Stores},
  journal = {Journal of Retailing and Consumer Services},
  volume  = {52},
  pages   = {101921},
  year    = {2020},
  doi     = {10.1016/j.jretconser.2019.101921}
}

@misc{Wang2026,
title={Partially Observable Restless Bandits for Age-Optimal Scheduling over Markov Channels},
author={Wang, Xijun and Gan, Shuying and Huang, Yanzhi and Zhao, Xiaoyu and Xu, Chao and Chen, Xiang},
year={2026},
howpublished={arXiv preprint arXiv:2605.21016}
}

@article{Fu2025,
author = {Fu, Jing and Zhang, Lele and Liu, Zhiyuan},
title = {A restless bandit model for dynamic ride matching with reneging travelers},
journal = {European Journal of Operational Research},
volume = {320},
number = {3},
pages = {581--592},
year = {2025}
}

@article{ArifogluOzekici2010,
author = {Arifo{\u{g}}lu, Kenan and {\"O}zekici, S{\"u}leyman},
title = {Optimal policies for inventory systems with finite capacity and partially observed Markov-modulated demand and supply processes},
journal = {European Journal of Operational Research},
volume = {204},
number = {3},
pages = {421--438},
year = {2010}
}

@article{liu2025relaxed,
  title={Relaxed indexability and index policy for partially observable restless bandits},
  author={Liu, Keqin},
  journal={Management Science},
  volume={71},
  number={12},
  pages={10106--10121},
  year={2025},
  publisher={INFORMS}
}

\newpage
\appendix

\section{Proofs}
\label{app:proofs}

This appendix collects the proofs of the structural and indexability results stated in Sections~\ref{sec:structural} and~\ref{sec:indexability}. The proofs are presented in the order in which the results appear in the main body, which also reflects the logical dependencies.

For ease of reference in the structural results that follow, we record the four conditions already introduced above under shorthand labels. \textbf{(A1)}~the net per-order margin $q := p - c_p$ is strictly positive, so each fulfilled order contributes positively to the reward; \textbf{(A2)}~the per-period backlog penalty dominates the margin, $c_b > q$, so an unfulfilled order is more costly than the margin a fulfilled order would generate; \textbf{(A3)}~the regime transition matrix $\bvec{T}$ is action-independent, as discussed in Section~\ref{sec:pomdp}; and \textbf{(A4)}~the action upper bound is non-binding at every backlog $s$ the finite-horizon recursion visits, so the optimal allocation is never constrained by $a_{\max}$. All four are stated as part of the model in Section~\ref{sec:pomdp} above and are not additional assumptions imposed at the proof stage; the labels (A1)--(A4) are used throughout the proofs below.

\begin{lemma}
\label{lem:continuation_vconcave}
Let $W : \mathbb{Z}_{\ge 0} \to \mathbb{R}$ be non-increasing with non-increasing $v$-step differences, that is $s \mapsto W(s+v) - W(s)$ is non-increasing. Then $G(y) := W(\max\{y,0\})$ is non-increasing in $y \in \mathbb{Z}$ and satisfies $G(z+1+v) - G(z+1) \le G(z+v) - G(z)$ for every $z \in \mathbb{Z}$.
\end{lemma}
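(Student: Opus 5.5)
The plan is a direct case analysis on where the truncation $\max\{\cdot,0\}$ is active; no earlier result of the paper is needed.

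\emph{Monotonicity of $G$.} The map $y\mapsto \max\{y,0\}$ is non-decreasing from $\mathbb{Z}$ into $\mathbb{Z}_{\ge 0}$. Composing it with the non-increasing $W$ gives a non-increasing $G$.

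\emph{The difference inequality.} Set $\Delta(z) := G(z+v)-G(z)$; the claim is $\Delta(z+1)\le \Delta(z)$ for all $z\in\mathbb{Z}$. Because $v\in\mathbb{Z}_{>0}$, the integers split into three ranges, according to which of the four arguments $z$, $z+1$, $z+v$, $z+1+v$ are nonpositive.

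\begin{enumerate}[label=(\alph*)]
\item $z\le -v-1$. All four arguments are $\le 0$. Hence $G$ equals $W(0)$ at each of them, and $\Delta(z)=\Delta(z+1)=0$.
\item $-v\le z\le -1$. Here $z\le 0$ and $z+1\le 0$, so $G(z)=G(z+1)=W(0)$. Also $z+v\ge 0$ and $z+v+1\ge 1$. Therefore
\[
\Delta(z+1)-\Delta(z) = W(z+v+1)-W(z+v) \le 0,
\]
which uses only the one-step monotonicity of $W$.
\item $z\ge 0$. No truncation is active. Then $\Delta(z)=W(z+v)-W(z)$ and $\Delta(z+1)=W(z+1+v)-W(z+1)$. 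The inequality $\Delta(z+1)\le\Delta(z)$ is exactly the hypothesis that $s\mapsto W(s+v)-W(s)$ is non-increasing, applied at $s=z$ and $s=z+1$.
\end{enumerate}

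\emph{Main obstacle.} The only delicate step is the boundary bookkeeping in case (b): one must check that the lower endpoints $z$ and $z+1$ are both clipped to $0$ while the upper endpoints $z+v$ and $z+v+1$ are not. This is where $v\ge 1$ and integrality are used. In particular, $z=-v-1$ must be placed in case (a), since there $z+1+v=0$. Once the ranges are fixed this way, each case follows in one line from the relevant hypothesis on $W$.
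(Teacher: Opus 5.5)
Your proof is correct and follows essentially the same route as the paper's. You get monotonicity of $G$ by composing the non-decreasing clamp with the non-increasing $W$, then split $\mathbb{Z}$ into the same three ranges $z\ge 0$, $-v\le z\le -1$, and $z\le -v-1$, closing each range with the same one-line appeal to the $v$-step hypothesis, to one-step monotonicity of $W$, or to all four values equalling $W(0)$.
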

\begin{proof}
$G$ is the composition of the non-decreasing $y \mapsto \max\{y,0\}$ with the non-increasing $W$, hence non-increasing. It remains to verify the $v$-step inequality, which we do by splitting $\mathbb{Z}$ into the three ranges on which the clamp $\max\{\cdot,0\}$ behaves differently. First, when $z \ge 0$, all four arguments are non-negative and the inequality is the non-increasing $v$-step difference of $W$. Second, when $-v \le z \le -1$, the two smaller arguments $z+1,z$ map to $0$ while $z+v,z+1+v \ge 0$, so the inequality reduces to non-increasingness of $W$. Third, when $z < -v$, all four values coincide and both sides vanish. The inequality therefore holds for every $z$.
\end{proof}

\begin{lemma}
\label{lem:translation}
Fix a period $t$ and a belief $\bvec{b}$, and assume (A3). For every $s \in \mathbb{Z}_{\ge 0}$ and every $a \in [a_{\min}, a_{\max}-1]$, the $Q$-function satisfies
\begin{equation}
Q_t(s+v,\,a+1) \;=\; Q_t(s,a) \;+\; \kappa, \qquad \kappa \;:=\; (q-c_b)\,v - c_a,
\label{eq:translation}
\end{equation}
with $\kappa < 0$ by (A2).
\end{lemma}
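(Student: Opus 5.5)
We will verify the identity by a direct term-by-term comparison inside the definition \eqref{eq:qfunction}. The key point is that, for a fixed realization $x$ and fixed regime $k$, replacing $(s,a)$ by $(s+v,a+1)$ leaves every stochastic and belief-related quantity unchanged. The expectation weights $b(k)f_k(x;d,h_t)$ do not depend on $(s,a)$. By (A3) the updated belief $\bvec{b}_{t+1}(x)=\bvec{T}^\top\mathrm{Correct}(\bvec{b},x;d,h_t)$ depends only on $\bvec{b}$ and $x$, and not on the action or the backlog. It therefore suffices to compare, pointwise in $x$, the one-period reward and the next-period backlog under the two pairs.

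First, the effective capacity gap is preserved. With $e=x+s$ we have
\[
(x+s+v)-v(a+1)=e-va,
\]
so that
\[
s_{t+1}(x)=\max\{e-va,0\}
\]
is identical under both pairs. Hence the continuation term $V_{t+1}^d(s_{t+1}(x),\bvec{b}_{t+1}(x))$ is the same. This step is where (A3) is essential: if the belief transition depended on $a$, the continuation values would differ.

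Second, we compute the reward difference. We use the identity
\[
\min\{e+v,\,v(a+1)\}=\min\{e,va\}+v,
\]
so the fulfilled quantity rises by exactly $v$, which adds $qv$ to the net margin. The wage term contributes $-c_a$, and the backlog penalty contributes $-c_b v$. Therefore
\[
r(x,s+v,a+1)=r(x,s,a)+qv-c_a-c_b v=r(x,s,a)+\kappa,
\]
a constant independent of $x$. Summing against the probability weights, which total $\sum_k b(k)\sum_x f_k(x)=1$ over the truncated support and are treated as normalized as in \eqref{eq:bellman}, gives $Q_t(s+v,a+1)=Q_t(s,a)+\kappa$.

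The requirement $a\le a_{\max}-1$ only ensures that $a+1\in\mathcal{A}$. Finally, $\kappa=(q-c_b)v-c_a<0$ holds because $q-c_b<0$ by (A2), $v>0$, and $c_a>0$.

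The only delicate step is the first one, the invariance of the continuation term. The same-$x$ coupling is legitimate only because neither the observation law nor the belief update involves the action, and because both pairs share the same $t$ and $\bvec{b}$. Once that is settled, the rest is the elementary min-shift identity above.
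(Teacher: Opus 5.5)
Your proposal is correct and follows the paper's proof essentially step for step. Both use the min-shift identity for the served quantity, cancel the added backlog $v$ against the added capacity $v$ in the successor state, invoke (A3) to keep the continuation term fixed, and sum the increments $qv - c_b v - c_a = \kappa$. Your remark that the weights $\sum_k b(k)\sum_x f_k(x)$ must be normalized over the truncated support is a small point the paper leaves implicit.
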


\begin{proof}
Compare the two state-action pairs $(s,a)$ and $(s+v,a+1)$. With $e := x+s$, moving to $(s+v,a+1)$ replaces the effective demand $e$ by $e+v$ and the capacity $va$ by $v(a+1)=va+v$, and the served quantity and next-period backlog transform by elementary algebra. For the served quantity, adding the same constant $v$ to both entries of a minimum shifts the minimum by $v$,
\[
\min\{e+v,\,v(a+1)\} \;=\; \min\{e+v,\,va+v\} \;=\; v + \min\{e,\,va\},
\]
so the served amount rises by exactly $v$. For the next-period backlog, expanding $v(a+1)=va+v$ cancels the added backlog against the added capacity,
\[
\bigl(e+v - v(a+1)\bigr)^+ \;=\; (e+v-va-v)^+ \;=\; (e - va)^+,
\]
so the carried-over backlog, and with it the continuation value, is unchanged. Moving from $(s,a)$ to $(s+v,a+1)$ therefore raises the served-revenue term $q\min\{e,va\}$ by $qv$, raises the backlog penalty $-c_b s$ by $-c_b v$, and raises the wage $-c_a a$ by $-c_a$, while the continuation value is unchanged because $\bvec{b}_{t+1}(x)$ is action-independent under (A3). Summing these increments gives \eqref{eq:translation} with $\kappa = (q-c_b)v - c_a$, and $\kappa < 0$ by (A2).
\end{proof}

\begin{lemma}
\label{lem:threshold}
Let $M, N : \mathbb{Z}_{\ge 0} \to \mathbb{R}$ and $\kappa \in \mathbb{R}$, and suppose there is a threshold $s^\circ \in \mathbb{Z}_{\ge 0} \cup \{+\infty\}$ with $M(s) = N(s)$ for $s < s^\circ$ and $M(s) = \kappa$ for $s \ge s^\circ$. If $N$ is non-increasing and, whenever $0 < s^\circ < \infty$, $N(s^\circ - 1) \ge \kappa$, then $M$ is non-increasing on $\mathbb{Z}_{\ge 0}$.
\end{lemma}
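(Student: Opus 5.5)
The statement to prove is Lemma~\ref{lem:threshold}: $M$ agrees with a non-increasing $N$ below the threshold $s^\circ$ and equals the constant $\kappa$ from $s^\circ$ on. Since $M$ is defined on the integers, it suffices to show $M(s+1)\le M(s)$ for every $s\in\mathbb{Z}_{\ge 0}$. I will split into cases according to where $s$ and $s+1$ sit relative to $s^\circ$.

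First dispose of the degenerate thresholds.
\begin{itemize}
\item If $s^\circ=+\infty$, then $M\equiv N$ on all of $\mathbb{Z}_{\ge 0}$. Monotonicity is inherited directly from $N$.
\item If $s^\circ=0$, then $M\equiv\kappa$. A constant function is trivially non-increasing.
\end{itemize}
Neither case uses the hypothesis $N(s^\circ-1)\ge\kappa$, which is consistent with that hypothesis being required only when $0<s^\circ<\infty$.

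Now take $0<s^\circ<\infty$ and fix $s\ge 0$. There are three sub-cases.
\begin{itemize}
\item \emph{Both points below the threshold} ($s+1<s^\circ$). Here $M(s+1)=N(s+1)\le N(s)=M(s)$ by monotonicity of $N$.
\item \emph{Both points at or above the threshold} ($s\ge s^\circ$). Here $M(s+1)=\kappa=M(s)$.
\item \emph{Straddling the threshold} ($s=s^\circ-1$). Here $M(s)=N(s^\circ-1)$ and $M(s+1)=\kappa$. The required inequality $\kappa\le N(s^\circ-1)$ is exactly the stated hypothesis.
\end{itemize}
Chaining the one-step inequalities gives $M(s')\le M(s)$ for all $s'\ge s$.

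There is no real obstacle. The only point needing care is the straddling step, where the hypothesis $N(s^\circ-1)\ge\kappa$ must be invoked precisely. The degenerate thresholds must also be handled separately so that $N(s^\circ-1)$ is never evaluated at $-1$ or at $\infty$.
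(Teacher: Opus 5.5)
Your proof is correct and follows essentially the same route as the paper. Both dispose of $s^\circ\in\{0,+\infty\}$ directly, and otherwise note that the only possible rise is the step from $s^\circ-1$ to $s^\circ$, which the hypothesis $N(s^\circ-1)\ge\kappa$ rules out; your explicit one-step case split is simply a more detailed write-up of that argument.
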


\begin{proof}
On $\{s < s^\circ\}$ the function $M = N$ is non-increasing, and on $\{s \ge s^\circ\}$ it is the constant $\kappa$. If $s^\circ \in \{0, +\infty\}$ one of these sets is empty and the claim is immediate. Otherwise the only place $M$ could rise is from $s^\circ - 1$ to $s^\circ$, and there $M(s^\circ - 1) = N(s^\circ - 1) \ge \kappa = M(s^\circ)$ by hypothesis. Hence $M$ is non-increasing throughout.
\end{proof}

\thmBacklog*

\begin{proof}[Proof of Theorem~\ref{prop:backlog_monotone}]
We fix $\bvec{b}$, suppress it, and write $V_t(s) := V_t(s,\bvec{b})$ and $Q_t(s,a) := Q_t(s,\bvec{b},a)$; by (A3) the posterior $\bvec{b}_{t+1}(x)$ is action-independent, so the suppression is harmless. We write $\Delta_s$ and $\Delta_a$ for the unit forward differences $\Delta_s\phi(s,a):=\phi(s+1,a)-\phi(s,a)$ and $\Delta_a\phi(s,a):=\phi(s,a+1)-\phi(s,a)$, and $\Delta_s\Delta_a$ for their composite cross-difference; the $v$-step difference is always written out.

The argument is a backward induction on $t$. To carry the induction through, we prove an invariant strictly stronger than the three claims:
\[
(H_t): \qquad s \mapsto V_t(s) \text{ is non-increasing, and its $v$-step difference } s \mapsto V_t(s+v) - V_t(s) \text{ is non-increasing.}
\]
Claims (i)--(iii) all follow from $(H_t)$ as shown below; the non-increasing $v$-step difference is the extra structure the induction needs but which the theorem does not report.

We argue by backward induction on $t$, proving $(H_t)$ for $t = T+1, T, \ldots, 1$. The base case is $t = T+1$, where the terminal value $V_{T+1}(s) = -c_{\mathrm{lost}}\,s$ is linear in $s$, hence non-increasing with a constant, and so non-increasing, $v$-step difference, and $(H_{T+1})$ holds. For the inductive step we assume $(H_{t+1})$ and establish $(H_t)$ in four pieces, each read off the two lemmas above. First, the continuation Lemma~\ref{lem:continuation_vconcave} together with the reward cross-difference makes $Q_t$ have increasing differences in $(s,a)$, which gives the monotone policy~(ii) by Topkis. Second, the translation identity of Lemma~\ref{lem:translation}, which compares $(s,a)$ with $(s+v,a+1)$ at the fixed cost $\kappa$, gives the bounded increase~(iii). Third, the strict decrease~(i) follows from $\Delta_s r \le q - c_b < 0$ by (A2). Fourth, a threshold argument packaged in Lemma~\ref{lem:threshold} shows the $v$-step difference of $V_t$ is non-increasing, the second half of $(H_t)$. Establishing all four proves $(H_t)$ and advances the induction one period, as recorded where the step closes below.

We first prove the monotone policy~(ii). By Lemma~\ref{lem:continuation_vconcave} applied to $W = V_{t+1}(\cdot,\,\bvec{b}_{t+1}(x))$, which by $(H_{t+1})$ is non-increasing with non-increasing $v$-step difference, the clamped continuation $G(y) := V_{t+1}(\max\{y,0\},\,\bvec{b}_{t+1}(x))$ inherits both properties for each $x$. We compute the cross-difference of $Q_t$ in two pieces. In the reward $r = q\min\{e,va\} - c_a a - c_b s$ with $e := x+s$, the wage $-c_a a$ depends on $a$ alone and the penalty $-c_b s$ on $s$ alone, so each is killed by the difference in the other variable and only the served-margin term $q\min\{e,va\}$ survives the cross-difference. The reward piece uses $\min\{e+1,m\} - \min\{e,m\} = \mathbf{1}\{e<m\}$ for any integer cap $m$ (the min rises one step until $e$ reaches $m$, then is flat), applied at the two caps $m = v(a+1)$ and $m = va$ produced by the actions $a+1$ and $a$, giving, with $e := x+s$,
\[
\Delta_s \Delta_a\, r \;=\; q\bigl[\mathbf{1}\{e<v(a+1)\} - \mathbf{1}\{e<va\}\bigr] \;=\; q\,\mathbf{1}\{va \le e < v(a+1)\} \;\ge\; 0
\]
by (A1), the two indicators differing only on the capacity band $va \le e < v(a+1)$ that the added driver serves. For the continuation piece, recall the next backlog is $s' = (x+s-va)^+ = \max\{y,0\}$ with $y := x+s-va$, so the continuation enters $Q_t$ as $G(y)$. Routing it through the single point $y$ absorbs the clamp into $G$ and turns the two-variable dependence into shifts of one function, since raising $s$ by one sends $y \mapsto y+1$ while raising $a$ by one sends $y \mapsto y-v$ (the extra driver adds $v$ to the capacity $va$). Evaluating $G$ at the four corners $(s,a)$, $(s{+}1,a)$, $(s,a{+}1)$, $(s{+}1,a{+}1)$, which sit at $y$, $y{+}1$, $y{-}v$, $y{-}v{+}1$, the cross-difference is
\[
\Delta_s \Delta_a\, G(x+s-va) \;=\; \bigl[G(y) - G(y-v)\bigr] - \bigl[G(y+1) - G(y-v+1)\bigr] \;\ge\; 0,
\]
which is the $v$-step difference $z \mapsto G(z)-G(z-v)$ evaluated at $z=y$ minus its value at $z=y+1$. It is $\ge 0$ precisely because that $v$-step difference is non-increasing, the property Lemma~\ref{lem:continuation_vconcave} supplies.
Both inequalities hold pointwise in $x$, so taking the expectation over $x$ gives $\Delta_s \Delta_a Q_t \ge 0$; that is, $Q_t$ has increasing differences in $(s,a)$. By \textit{Topkis's theorem} \citep{topkis1998supermodularity} the smallest maximizer $a^*_t(s)$ is non-decreasing in $s$, which is~(ii).

We next prove the bounded increase~(iii). Write $\bar a := a^*_t(s)$. Any action with $va \ge x_{\max} + s$ already serves all demand and zeros next-period backlog, so a larger action only adds wage ($c_a > 0$); under the smallest-argmax convention $\bar a \le \lceil (x_{\max} + s)/v \rceil < a_{\max}$ by (A4), so $\bar a \in [a_{\min}, a_{\max}-1]$ and Lemma~\ref{lem:translation} applies. Subtracting the identity at $\bar a$ from the identity at any $a \in [a_{\min}, a_{\max}-1]$, where the two copies of $\kappa$ from \eqref{eq:translation} cancel,
\[
Q_t(s+v,\, a+1) - Q_t(s+v,\, \bar a + 1) \;=\; Q_t(s,\,a) - Q_t(s,\,\bar a) \;\le\; 0,
\]
the last inequality being optimality of $\bar a$ at $s$. Hence $\bar a + 1$ is the smallest maximizer of $Q_t(s+v,\cdot)$ over $[a_{\min}+1, a_{\max}]$, the image of $[a_{\min}, a_{\max}-1]$ under $a \mapsto a+1$. The only action outside that range is $a_{\min}$, leaving two cases:
\begin{itemize}
\item if $\bar a \ge a_{\min}+1$, then (ii) gives $a^*_t(s+v) \ge \bar a > a_{\min}$, so $a_{\min}$ is not the maximizer and $a^*_t(s+v) = \bar a + 1$;
\item if $\bar a = a_{\min}$, then $a^*_t(s+v)$ is either $a_{\min}$ or $a_{\min}+1$, both at most $\bar a + 1$.
\end{itemize}
In either case $\bar a \le a^*_t(s+v) \le \bar a + 1$, which is~(iii).

For non-increasingness and the strict decrease in~(i), evaluate the value at $s$ using the action $a^* := a^*_t(s+1)$ that is optimal one step up, which is feasible but generally suboptimal at $s$. Then optimality at $s$ gives $V_t(s) \ge Q_t(s,\, a^*_t(s+1))$, while $V_t(s+1) = Q_t(s+1, a^*)$ by definition. Subtracting keeps the action fixed across the two backlogs, so
\[
V_t(s+1) - V_t(s) \;\le\; Q_t(s+1, a^*) - Q_t(s, a^*) \;=\; \mathbb{E}_x\!\bigl[\Delta_s r + \Delta_s G(x+s-va)\bigr]\big|_{a = a^*_t(s+1)}.
\]
For every $(x,a)$, $\Delta_s r = q\,\mathbf{1}\{e<va\} - c_b \le q - c_b < 0$ by (A2), and $\Delta_s G(x+s-va) \le 0$ since $G$ is non-increasing; $V_t(s+1) < V_t(s)$ strictly.

It remains to prove the second half of $(H_t)$: that the $v$-step difference $M(s) := V_t(s+v) - V_t(s)$ is non-increasing. The point is that $M$ follows one of two formulas according to whether the optimal action at $s+v$ has left its floor $a_{\min}$, and Lemma~\ref{lem:threshold} stitches the two regimes together. We assemble its three ingredients.

First, the translation identity collapses the maximization at $s+v$. By Lemma~\ref{lem:translation}, each action $a \in [a_{\min}, a_{\max}-1]$ at $s$ corresponds to the action $a+1$ at $s+v$ with value raised by $\kappa$. Maximizing over this shifted block $[a_{\min}+1, a_{\max}]$ and pulling the constant $\kappa$ out of the maximum,
\[
\max_{a' \in [a_{\min}+1,\, a_{\max}]} Q_t(s+v, a') \;=\; \kappa + \max_{a \in [a_{\min},\, a_{\max}-1]} Q_t(s, a) \;=\; V_t(s) + \kappa,
\]
where the last equality uses that the maximizer $a^*_t(s)$ lies in $[a_{\min}, a_{\max}-1]$ by~(iii) and~(A4), so dropping the unused top action $a_{\max}$ from the maximum leaves the full value $V_t(s)$. The only action at $s+v$ outside the block is $a_{\min}$, the lone member of the set difference $[a_{\min}, a_{\max}] \setminus [a_{\min}+1, a_{\max}]$, so
\begin{equation}
V_t(s+v) \;=\; \max\bigl\{Q_t(s+v,\, a_{\min}),\; V_t(s) + \kappa\bigr\}.
\label{eq:Vtsv_max}
\end{equation}
Second, let $N(s) := Q_t(s+v,\, a_{\min}) - Q_t(s,\, a_{\min})$ be the $v$-step difference of $V_t$ in the regime where the action stays pinned at $a_{\min}$. Then $N$ is non-increasing: its reward part is non-increasing in $s$ because $s \mapsto \min\{x+s,\, va_{\min}\}$ is concave (slope $1$ until $x+s$ reaches the cap $va_{\min}$, then $0$), and the $v$-step difference of a concave function is non-increasing, and its continuation part is non-increasing by Lemma~\ref{lem:continuation_vconcave} (it is the $v$-step difference of the clamped continuation $G$).

Third, by~(ii) the action $a^*_t(s+v)$ is non-decreasing in $s$, so there is a threshold $s^\circ \in \mathbb{Z}_{\ge 0} \cup \{+\infty\}$ with $a^*_t(s+v) = a_{\min}$ exactly when $s < s^\circ$. This splits $M(s) = V_t(s+v) - V_t(s)$ into the two regimes Lemma~\ref{lem:threshold} expects, a floor regime where the optimal action at $s+v$ is pinned at $a_{\min}$ and a lifted regime where it has moved above $a_{\min}$:
\begin{itemize}
\item for $s < s^\circ$, (ii) also forces $a^*_t(s) = a_{\min}$, so both $V_t(s+v)$ and $V_t(s)$ are attained at $a_{\min}$ and $M(s) = N(s)$;
\item for $s \ge s^\circ$, the smallest-argmax convention gives $Q_t(s+v,\, a_{\min}) < V_t(s+v)$, so the second branch of \eqref{eq:Vtsv_max} is active and $M(s) = \kappa$.
\end{itemize}
When $0 < s^\circ < \infty$ we check the seam: the action $a_{\min}$ is still optimal at state $s^\circ - 1 + v$ (because $s^\circ - 1 < s^\circ$), so it beats $a_{\min}+1$ there, and rewriting the loser with Lemma~\ref{lem:translation} at $a = a_{\min}$,
\[
Q_t(s^\circ - 1 + v,\, a_{\min}) \;\ge\; Q_t(s^\circ - 1 + v,\, a_{\min}+1) \;=\; Q_t(s^\circ - 1,\, a_{\min}) + \kappa,
\]
which is exactly $N(s^\circ - 1) \ge \kappa$. The triple $(M, N, s^\circ)$, with $M(s) = V_t(s+v) - V_t(s)$ and $N(s) = Q_t(s+v, a_{\min}) - Q_t(s, a_{\min})$, now meets the hypotheses of Lemma~\ref{lem:threshold}, so $M$ is non-increasing. This establishes $(H_t)$, completing the inductive step and closing the backward induction begun above.
\end{proof}

\begin{lemma}
\label{lem:translation_aug}
For the Lagrangian-augmented reward with per-driver cost $c_a+\lambda$ ($\lambda\ge0$), Lemma~\ref{lem:translation} holds verbatim with $\kappa$ replaced by $\kappa_\lambda := \kappa-\lambda<0$.
\end{lemma}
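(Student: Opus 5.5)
The plan is to rerun the proof of Lemma~\ref{lem:translation} with the single-period reward replaced by $r^\lambda(x,s,a) = r(x,s,a) - \lambda a$ and the $Q$-function replaced by its augmented counterpart $Q^\lambda_t$. This $Q^\lambda_t$ is defined through \eqref{eq:qfunction} with $r^\lambda$ in place of $r$, and with the continuation $V^\lambda_{t+1}$ of the augmented recursion (same terminal condition $-c_{\mathrm{lost}}\,s$). The comparison is again between the pairs $(s,a)$ and $(s+v,a+1)$ at a fixed belief $\bvec{b}$ and observation $x$.

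The two algebraic identities from that proof do not involve the reward coefficients at all, so they carry over unchanged. The served quantity satisfies $\min\{e+v,\,v(a+1)\} = v+\min\{e,va\}$, so it rises by exactly $v$. The next backlog satisfies $(e+v-v(a+1))^+ = (e-va)^+$, so it is unchanged. By (A3) the belief $\bvec{b}_{t+1}(x)$ does not depend on the action or the backlog. Hence the continuation term $V^\lambda_{t+1}(s_{t+1}(x),\bvec{b}_{t+1}(x))$ is identical for both pairs, pointwise in $x$. This holds whatever the augmented continuation value is, so no inductive property of $V^\lambda_{t+1}$ is needed.

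That leaves the per-$x$ change in the immediate reward. The served-margin term rises by $qv$, and the backlog penalty changes by $-c_b v$. The wage term is now $-(c_a+\lambda)a$, so it changes by $-(c_a+\lambda)$. The total increment is therefore
\[
(q-c_b)v - c_a - \lambda \;=\; \kappa - \lambda \;=:\; \kappa_\lambda.
\]
This increment is a constant independent of $x$. Weighting by $f_k(x;d,h_t)$ and $b(k)$, whose total mass is one, gives $Q^\lambda_t(s+v,a+1) = Q^\lambda_t(s,a) + \kappa_\lambda$.

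Finally, $\kappa<0$ by (A2) and $\lambda\ge 0$, so $\kappa_\lambda \le \kappa < 0$. The only point needing care is the domain of the identity: it should hold for every $a\in[a_{\min},a_{\max}-1]$, exactly as in Lemma~\ref{lem:translation}, and the argument above does not use optimality anywhere. There is no real obstacle. The statement amounts to observing that $\lambda$ enters only as the linear action cost, which the translation absorbs as one extra driver's worth of price.
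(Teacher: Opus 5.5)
Your proposal is correct and is the argument the paper intends, though the paper records it only as ``holds verbatim'': rerun the proof of Lemma~\ref{lem:translation} with the wage $c_a$ replaced by $c_a+\lambda$. The served-quantity identity, the backlog identity and the action-independence of the belief under (A3) are all untouched, so only the wage increment changes from $-c_a$ to $-(c_a+\lambda)$, and $\kappa_\lambda \le \kappa < 0$ follows from (A2) and $\lambda \ge 0$; you are also right that no property of the augmented continuation value is needed.
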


\begin{restatable}{lemma}{lemReduction}
\label{lem:reduction}
If for every $\bvec{b}\in\Delta^{K-1}$ and every $\lambda \ge 0$ the function $D_{t+1}(s, \bvec{b}; \lambda)$ is non-decreasing in $s$ and satisfies $D_{t+1}(s + v, \bvec{b}; \lambda) - D_{t+1}(s, \bvec{b}; \lambda) \leq 1$, then the $Q$-function $Q_t$ exhibits decreasing differences in $(a, \lambda)$, and the optimal action $a^*(\lambda)$ is non-increasing in $\lambda$.
\end{restatable}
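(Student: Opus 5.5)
The plan is to prove decreasing differences of $Q_t$ in $(a,\lambda)$ through an envelope-theorem representation of the $\lambda$-derivative of the continuation value, and then to read off monotonicity of $a^*(\lambda)$ from Topkis's theorem. Fix $t$, $s$, $\bvec{b}$, and write
\[
Q_t(s,\bvec{b},a;\lambda)=\mathbb{E}\bigl[r(x,s,a)-\lambda a+V_{t+1}\bigl(s_a'(x),\bvec{b}_{t+1}(x);\lambda\bigr)\bigr],
\qquad s'_a(x)=(x+s-va)^+ .
\]
By (A3) the next belief $\bvec{b}_{t+1}(x)$ does not depend on $a$. Consequently the two actions $a$ and $a+1$ face the same belief path and differ only through the wage term and the next backlog. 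This is the only place where the exogenous-regime structure is used, and it is essential.

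\textbf{Step 1 (envelope).} For each fixed state, $V_{t+1}(\cdot;\lambda)$ is the maximum over the finitely many deterministic Markov policies of functions that are affine in $\lambda$. Each such function has slope $-\mathbb{E}[\sum_\tau a_\tau]$. Hence $V_{t+1}$ is convex and piecewise linear in $\lambda$, and in particular absolutely continuous. At every $\lambda$ off a finite set of kinks, its derivative equals $-D_{t+1}(s,\bvec{b};\lambda)$, with $D_{t+1}$ computed under the (there unique) optimal policy; this is the Milgrom--Segal envelope theorem, cited in the bibliography. Differentiating $Q_t$ almost everywhere then gives
\[
\partial_\lambda Q_t(a;\lambda)=-a-\mathbb{E}\bigl[D_{t+1}(s'_a(x),\bvec{b}_{t+1}(x);\lambda)\bigr].
\]

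\textbf{Step 2 (the key bound).} Taking the action difference, it suffices to show that for every $x$,
\[
0\le D_{t+1}(s'_a(x))-D_{t+1}(s'_{a+1}(x))\le 1,
\]
since then $\Delta_a\partial_\lambda Q_t\le -1+1=0$. Put $y=x+s-va$, so that $s'_a=y^+$ and $s'_{a+1}=(y-v)^+$. I would split into three cases, mirroring Lemma~\ref{lem:continuation_vconcave}:
\begin{itemize}[nosep]
\item If $y\ge v$, the difference is $D_{t+1}(z+v)-D_{t+1}(z)$ with $z=y-v\ge 0$, which lies in $[0,1]$ by the two hypotheses.
\item If $0\le y<v$, the difference is $D_{t+1}(y)-D_{t+1}(0)$. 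By monotonicity of $D_{t+1}$ it lies between $0$ and $D_{t+1}(v)-D_{t+1}(0)\le 1$.
\item If $y<0$, both backlogs are $0$ and the difference vanishes.
\end{itemize}

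\textbf{Step 3 (integrate and apply Topkis).} For $\lambda_1<\lambda_2$, absolute continuity gives
\[
\bigl[Q_t(a+1;\lambda_2)-Q_t(a;\lambda_2)\bigr]-\bigl[Q_t(a+1;\lambda_1)-Q_t(a;\lambda_1)\bigr]=\int_{\lambda_1}^{\lambda_2}\Delta_a\partial_\lambda Q_t\,d\lambda\le 0.
\]
This is exactly decreasing differences in $(a,\lambda)$. Topkis's theorem, applied to $-\lambda$ (equivalently, the dual form for submodular objectives), then yields that the smallest maximizer $a^*(\lambda)$ is non-increasing in $\lambda$.

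\textbf{Main obstacle.} I expect the delicate point to be Step 1. Specifically, one must justify that the almost-everywhere derivative of $V_{t+1}$ is $-D_{t+1}$ evaluated under the \emph{same} tie-breaking convention used to define $D_{t+1}$ in the hypothesis, and that the hypothesis is only needed off the kink set. My first attempt would avoid derivatives altogether. For $\lambda_1<\lambda_2$, I would sandwich $V_{t+1}(\lambda_2)-V_{t+1}(\lambda_1)$ between $-(\lambda_2-\lambda_1)D_{t+1}(\lambda_2)$ and $-(\lambda_2-\lambda_1)D_{t+1}(\lambda_1)$, which follows by comparing the optimal policies at the two prices. Combining this sandwich with the bound of Step 2 applied on a fine partition of $[\lambda_1,\lambda_2]$ should give the integrated inequality directly.
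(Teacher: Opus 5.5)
Your proposal is correct and follows essentially the same route as the paper: convex piecewise-linear dependence on $\lambda$, the envelope identity $\partial_\lambda Q_t=-a-\mathbb{E}_x[D_{t+1}]$ off finitely many kinks, the bound $D_{t+1}(s'_a)-D_{t+1}(s'_{a+1})\le 1$ from monotonicity plus the $v$-step hypothesis, extension across the kinks by continuity, and then Topkis. Your explicit three-case split and the integral form of Step 3 are slightly more careful versions of the paper's steps. The tie-breaking worry you raise goes away because, off a kink, every optimal policy has the same slope, so $-\partial_\lambda V_{t+1}$ equals $D_{t+1}$ under the smallest-argmax convention.
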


\begin{proof}[Proof of Lemma~\ref{lem:reduction}]
The argument has two parts: first, $V_t$ is convex and piecewise linear in $\lambda$ with finitely many breakpoints; then, off those breakpoints $\partial_\lambda Q_t$ has decreasing differences in $a$, which gives the monotone comparative statics via Topkis. Fix $\bvec{b}$; by (A3) the next belief $\bvec{b}'(x)$ is action-independent, so the inductive monotonicity hypothesis on $D_{t+1}$ holds at $\bvec{b}$ and at $\bvec{b}'(x)$ pointwise in $x$. By backward induction, $V_{t+1}(s, \bvec{b}; \cdot)$ is convex and piecewise linear in $\lambda$: $V_{T+1} = -c_{\mathrm{lost}}\,s$ is constant in $\lambda$, and if $V_{t+2}$ is convex and piecewise linear in $\lambda$ then $Q_{t+1}(s, \bvec{b}, a; \lambda) = \mathbb{E}_x[r(x,s,a) - \lambda a + V_{t+2}(s', \bvec{b}'; \lambda)]$ is affine plus convex piecewise linear at fixed $a$, and $V_{t+1} = \max_a Q_{t+1}$ inherits the structure as a pointwise maximum over the finitely many actions $a\in\mathcal{A}$ of convex piecewise-linear functions of $\lambda$ (a max of convex PWL functions is convex PWL). The breakpoints in $\lambda$ form a finite set at every $(s, \bvec{b})$.

We now turn to the second part. Off the breakpoint set just identified the optimal action $a^*(\lambda)$ is locally constant, so by the \textit{envelope theorem} \citep{MilgromSegal2002} applied to $V_t=\max_a Q_t$ we have $\partial V_t/\partial\lambda=\partial_\lambda Q_t(s,\bvec{b},a^*;\lambda)$, which equals $-D_t(s,\bvec{b};\lambda)$ by evaluating the per-stage identity below at $a=a^*$ and unrolling the recursion. Differentiating $Q_t$ in $\lambda$ at fixed $a$ and applying the envelope identity to the continuation,
\[
\frac{\partial Q_t}{\partial \lambda}(s, \bvec{b}, a; \lambda) \;=\; -a \,-\, \mathbb{E}_x\bigl[D_{t+1}(s'(s,a), \bvec{b}'(x); \lambda)\bigr], \qquad s'(s,a) := \max\{s+x-va, 0\}.
\]
Evaluating at $a = a^*$ and using the one-step recursion $D_t(s) = a^* + \mathbb{E}_x[D_{t+1}(s'(s,a^*))]$, the right side is $-(a^* + \mathbb{E}_x[D_{t+1}]) = -D_t(s, \bvec{b}; \lambda)$, which is the claimed identity $\partial_\lambda V_t = -D_t$.

This identity holds by backward induction on $t$: the base case $V_{T+1} = -c_{\mathrm{lost}}\,s$ is independent of $\lambda$, so $\partial_\lambda V_{T+1} = 0 = -D_{T+1}$, and the continuation term above carries the inductive hypothesis $\partial_\lambda V_{t+1} = -D_{t+1}$. The derivative of $Q_t$ at a fixed action $a$ is read directly from its explicit form, in which only the wage term $-\lambda a$ and the continuation depend on $\lambda$; the envelope theorem is used only for the value $V_t = \max_a Q_t$ at its optimum.

Taking the discrete difference in $a$,
\[
\Delta_a\!\bigl[\partial_\lambda Q_t\bigr] \;=\; -1 \,+\, \mathbb{E}_x\bigl[D_{t+1}(s'(s,a), \bvec{b}'; \lambda) - D_{t+1}(s'(s,a+1), \bvec{b}'; \lambda)\bigr].
\]
Setting $y := s+x-va$, the successor gap $s'(s,a) - s'(s,a+1) = \max\{y,0\} - \max\{y-v,0\}$ lies in $[0, v]$ ($\max\{\cdot,0\}$ is non-decreasing and $1$-Lipschitz), with the extremes $0$ at $y < 0$ and $v$ at $y \ge v$. Inductive monotonicity of $D_{t+1}$ together with the bounded-$v$-step hypothesis bounds the bracketed difference by $1$, so $\Delta_a[\partial_\lambda Q_t]\le0$ wherever differentiable. Each $Q_t(s,\bvec{b},a;\cdot)$ is piecewise linear, hence continuous, in $\lambda$, so $\Delta_a Q_t$ is continuous; a continuous function that is non-increasing off the finite set of breakpoints is non-increasing on all of $[0,\infty)$. Hence $Q_t$ has decreasing differences in $(a,\lambda)$. Therefore, by \textit{Topkis's theorem} \citep{topkis1998supermodularity}, $a^*(\lambda)$ is non-increasing in $\lambda$.
\end{proof}

\thmIndexability*
\begin{proof}[Proof of Theorem~\ref{prop:indexability}]
The goal is to verify the two hypotheses of Lemma~\ref{lem:reduction}, labeled (a)-(b) below, by backward induction, after which the lemma delivers indexability; the Lagrangian augmentation only shifts the per-driver cost, so parts~(ii)--(iii) of Theorem~\ref{prop:backlog_monotone} apply verbatim. Fix $\bvec{b}$ and $\lambda \ge 0$ and write $D_t(s) := D_t(s, \bvec{b}; \lambda)$ and $a^*_t(s) := a^*_t(s, \bvec{b}; \lambda)$ under the smallest-argmax convention; by (A3) every expectation below is taken at the same next belief $\bvec{b}'(x)$ pointwise in $x$. Recall that $D_t(s)$ is the expected driver-hours the optimal policy spends from $(s,t)$ to the horizon, equivalently the marginal value $-\partial_\lambda V_t$ of relaxing the driver budget (the two readings coincide by the envelope identity $\partial_\lambda V_t = -D_t$ derived in the proof of Lemma~\ref{lem:reduction}; raising the price $\lambda$ lowers value at a rate equal to the driver-hours bought). By Lemma~\ref{lem:reduction}, indexability holds once $D_t$ is non-decreasing in $s$ and grows by at most one over a step of $v$: an extra unit of backlog never lets the store return driver-hours, and $v$ extra units cost at most one more driver-hour, since one driver clears $v$ orders. We prove both by backward induction on $t$: for every $s \in \mathbb{Z}_{\ge 0}$ and every $\lambda \ge 0$,
\begin{enumerate}[label=(\alph*)]
  \item $D_t(s+1, \bvec{b}; \lambda) \ge D_t(s, \bvec{b}; \lambda)$,
  \item $D_t(s+v, \bvec{b}; \lambda) - D_t(s, \bvec{b}; \lambda) \le 1$.
\end{enumerate}
The terminal case $D_{T+1} \equiv 0$ satisfies both, which is the base case $t = T+1$. For the inductive step, assume (a)--(b) at $t+1$ and establish them at $t$.

We first import the policy structure and record two facts. By Lemma~\ref{lem:translation_aug}, the augmented reward (per-driver cost $c_a + \lambda$) obeys the translation identity with constant $\kappa_\lambda = \kappa - \lambda < 0$, and (A1)--(A4) do not involve $\lambda$, so parts~(ii) and~(iii) of Theorem~\ref{prop:backlog_monotone} apply to $a^*_t(\cdot, \bvec{b}; \lambda)$: it is non-decreasing in $s$ with $a^*_t(s+v) \le a^*_t(s) + 1$. Write $\bar{a} := a^*_t(s)$ and $s'_\sigma := \max\{\sigma + x - v\,a^*_t(\sigma),\, 0\}$ for the next-period backlog from a generic starting backlog $\sigma$ (we use $\sigma$ rather than $r$ here so as not to clash with the reward $r(x,s,a)$). The driver-hours obey the one-step recursion $D_t(\sigma) = a^*_t(\sigma) + \mathbb{E}_x[D_{t+1}(s'_\sigma)]$, the drivers hired now plus the expected driver-hours from the successor onward. Writing it at $\sigma$ and at $s$ and subtracting, and combining the two expectations under a common demand $x$ (legitimate because (A3) makes the next belief $\bvec{b}'(x)$ action-independent, so both successors sit at the same belief), the cost-to-go splits into an action term and a continuation term,
\begin{equation}
D_t(\sigma) - D_t(s) \;=\; \bigl[a^*_t(\sigma) - \bar a\bigr] \;+\; \mathbb{E}_x\bigl[D_{t+1}(s'_{\sigma}) - D_{t+1}(s'_s)\bigr], \qquad \sigma \in \{s+1,\, s+v\}.
\label{eq:D_decomp}
\end{equation}
Moreover, the hypotheses (a)--(b) at $t+1$ make $D_{t+1}$ non-decreasing with $v$-step at most one, so for any two successors with $0 \le s' - s'' \le v$,
\begin{equation}
0 \;\le\; D_{t+1}(s') - D_{t+1}(s'') \;\le\; 1.
\label{eq:Dcont_bound}
\end{equation}

We first verify~(a), taking $\sigma = s+1$ in \eqref{eq:D_decomp}. By~(ii)--(iii), $a_1 := a^*_t(s+1) \in \{\bar a, \bar a+1\}$. With $y := s + x - v\bar a$ the two successors are $s'_s = \max\{y, 0\}$ and $s'_{s+1} = \max\{y + 1 - v(a_1 - \bar a),\, 0\}$ (the latter from $s'_{s+1} = \max\{(s+1)+x - v a_1,\, 0\}$ after substituting $y$), and we bound \eqref{eq:D_decomp} in each case:
\begin{itemize}
\item if $a_1 = \bar a$, the action term is $0$ and $s'_{s+1} - s'_s \in \{0,1\}$, so the continuation term is $\ge 0$ by \eqref{eq:Dcont_bound}; hence $D_t(s+1) - D_t(s) \ge 0$;
\item if $a_1 = \bar a + 1$, the action term is $1$ and $s'_{s+1} = \max\{y - v + 1,\, 0\} \le s'_s$ with $s'_s - s'_{s+1} \le v$, so the continuation term is $\ge -1$ by \eqref{eq:Dcont_bound}; hence $D_t(s+1) - D_t(s) \ge 1 - 1 = 0$.
\end{itemize}

We next verify~(b), taking $\sigma = s+v$ in \eqref{eq:D_decomp}. Let $\delta := \bar a + 1 - a^*_t(s+v) \in \{0,1\}$, which by~(iii) measures how far the action at $s+v$ falls short of $\bar a + 1$. The successors are $s'_s = \max\{s + x - v\bar a,\, 0\}$ and $s'_{s+v} = \max\{s + x - v\bar a + v\delta,\, 0\}$ (using $a^*_t(s+v) = \bar a + 1 - \delta$, so the start $s+v$ gives $\max\{(s+v)+x - v\,a^*_t(s+v),\, 0\}$), so $0 \le s'_{s+v} - s'_s \le v\delta$. The action term is $a^*_t(s+v) - \bar a = 1 - \delta$, and by \eqref{eq:Dcont_bound} the continuation term is at most $\delta$: when $\delta = 1$ the successors differ by at most $v$, giving at most $1$; when $\delta = 0$ they coincide, giving $0$. Hence
\[
D_t(s+v) - D_t(s) \;\le\; (1 - \delta) + \delta \;=\; 1.
\]
The induction closes, and Lemma~\ref{lem:reduction} yields $a^*_t(s, \bvec{b}; \lambda)$ non-increasing in $\lambda$ at every state; that is, the single-store POMDP is indexable.
\end{proof}

\corIndexMono*
\begin{proof}[Proof of Corollary~\ref{cor:index_monotone}]
Fix a store and drop the subscript $\ell$, fix $t$ and $\bvec{b}$, and write $a^*(\lambda; s)$ for the smallest optimal action at state $(s, \bvec{b})$ under shadow price $\lambda$, so that $\Lambda^{(j)}(s, \bvec{b}) = \inf\{\lambda \ge 0 : a^*(\lambda; s) < j\}$ by Equation~\eqref{eq:index}. By Lemma~\ref{lem:translation_aug} the augmented problem satisfies the translation identity with constant $\kappa_\lambda < 0$, and (A1)--(A4) do not involve $\lambda$, so parts~(ii) and~(iii) of Theorem~\ref{prop:backlog_monotone} apply to $a^*(\lambda; \cdot)$ at every price $\lambda \ge 0$, exactly as in the proof of Theorem~\ref{prop:indexability}.

We first show the index is non-decreasing in $s$. Fix backlogs $s_1 \le s_2$ and a rank $j$. For any $\lambda$ with $a^*(\lambda; s_2) < j$, part~(ii) gives $a^*(\lambda; s_1) \le a^*(\lambda; s_2) < j$, so every price at which the store demands fewer than $j$ drivers at backlog $s_2$ does so at backlog $s_1$ as well. The set $\{\lambda \ge 0 : a^*(\lambda; s_2) < j\}$ is therefore contained in $\{\lambda \ge 0 : a^*(\lambda; s_1) < j\}$, and the infimum over the smaller set is no smaller. Hence $\Lambda^{(j)}(s_1, \bvec{b}) \le \Lambda^{(j)}(s_2, \bvec{b})$.

It remains to prove the second inequality. For any $\lambda$ with $a^*(\lambda; s) < j$, part~(iii) gives $a^*(\lambda; s+v) \le a^*(\lambda; s) + 1 < j+1$, so $\{\lambda \ge 0 : a^*(\lambda; s) < j\}$ is contained in $\{\lambda \ge 0 : a^*(\lambda; s+v) < j+1\}$. Taking infima, and noting the infimum over the larger set is no larger, $\Lambda^{(j+1)}(s+v, \bvec{b}) \le \Lambda^{(j)}(s, \bvec{b})$.
\end{proof}

% CITY ORDER (alphabetical, with integer `city` code): ALC=4, BCN=0, BIL=10, GRA=7, LPA=9 (dropped), MAD=1, MAL=6, PAL=5, PNA=2, SEV=3, TFN=11 (dropped), VAL=12, ZAR=8
% City dict (integer `city` code -> anonymized letter), SOURCE OF TRUTH:
%   {0:'E', 1:'B', 2:'I', 3:'H', 4:'A', 5:'G', 6:'F', 7:'D', 8:'K', 9:'L', 10:'C', 11:'M', 12:'J'}
%   string codes: BCN=E, MAD=B, PNA=I, SEV=H, ALC=A, PAL=G, MAL=F, GRA=D, ZAR=K, LPA=L, BIL=C, TFN=M, VAL=J
%   (keys 9=LPA and 11=TFN were previously dropped; now assigned L and M)
% Store dict ((city_code, cat_mfc) -> anonymized store label); single-store cities use the bare letter:
%   B (MAD): 3->B1, 4->B2, 5->B3, 18->B4, 21->B5, 22->B6, 23->B7, 26->B8   [focal store MAD-22 = B6]
%   E (BCN): 0->E1, 1->E2, 2->E3, 6->E4, 7->E5, 17->E6, 19->E7, 24->E8
%   single-store: A=ALC-10, C=BIL-16, D=GRA-13, F=MAL-12, G=PAL-11, H=SEV-9, I=PNA-8, J=VAL-25, K=ZAR-14, L=LPA-15, M=TFN-20

\section{Data Descriptive Table}~\begin{table}[h!]
  % \scriptsize
    \centering
    \begin{tabular}{cccccccc}
    \toprule
    City   & No. Stores & Mean   & St.Dev.   & Max.   & Length (days) & KPSS Statistic$^\dagger$ & Ap.Entropy$^\ddagger$ \\
    \midrule
    A & 1 & 4.63  & 3.41  & 18  & 296 & 8.21   & 2.00 \\
    B & 8 & 67.81 & 45.95 & 501 & 542 & 9.62   & 1.23 \\
    C & 1 & 4.36  & 3.31  & 23  & 352 & 2.36   & 2.01 \\
    D & 1 & 6.36  & 3.84  & 19  & 142 & 0.41*  & 1.77 \\
    E & 8 & 69.36 & 47.74 & 280 & 575 & 18.36  & 1.24 \\
    F & 1 & 7.84  & 5.18  & 30  & 275 & 5.70   & 1.55 \\
    G & 1 & 10.16 & 6.00  & 36  & 226 & 2.66   & 1.69 \\
    H & 1 & 8.63  & 5.33  & 31  & 205 & 2.49   & 1.58 \\
    I & 1 & 5.33  & 4.06  & 25  & 321 & 2.60   & 1.96 \\
    J & 1 & 14.69 & 9.15  & 56  & 225 & 4.82   & 1.79 \\
    K & 1 & 7.25  & 5.09  & 31  & 345 & 1.59   & 1.52 \\
    L & 1 & 4.13  & 3.29  & 23  & 342 & 10.15  & 1.98 \\
    M & 1 & 5.97  & 3.95  & 25  & 323 & 6.40   & 2.02 \\
    \bottomrule
    \multicolumn{8}{l}{$\dagger$: Kwiatkowski–Phillips–Schmidt–Shin (KPSS) test statistic for stationarity.}\\ 
    \multicolumn{8}{l}{ \quad * stationarity hypothesis not rejected ($p>.05$).}\\
    \multicolumn{8}{l}{$\ddagger$: Approximate Entropy to measure time series irregularity.}
    \end{tabular}%
\caption{Descriptive statistics of hourly demand by city. Statistics are computed on each city's hourly demand summed across stores over operating hours, with Approximate Entropy evaluated at $m=2$, $r=0.2\sigma$.}
  \label{table:descriptive}%
\end{table}%

\section{Information Ladder for the Bayesian Filter}
\label{app:info_ladder}

Figure~\ref{fig:info_ladder} places the realized belief-updating gain against the relevant information bounds, using synthetic days from the fitted model for store~B6 so the true regime path is known. Because these days are generated under the fitted model with the regime path known, the ladder measures what the filter can extract within the model. Perfect information about the current regime, the expected value of perfect information (EVPI), would be worth about 20\% per day over the static-belief policy, but no causal policy can reach it as the latent state cannot be directly observed in practice. Knowing the previous hour's regime, propagated one step through $\bvec{T}$, is worth about 12\%, so approximately half of the EVPI is lost to one hour of regime mixing. Our Bayesian filter captures about 8\%, two thirds of this attainable bound, and the shortfall reflects regime emissions too overlapping to pin down the regime from a single hour of demand.~\begin{figure}[h!]
    \centering
    \includegraphics[width=0.35\textwidth]{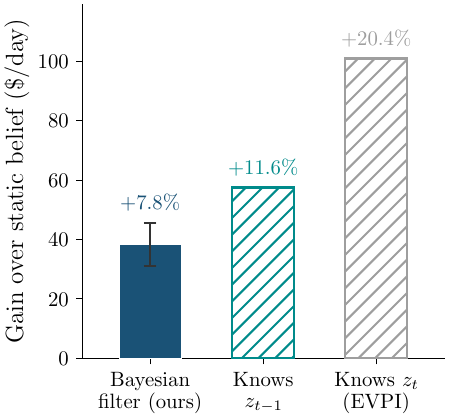}
    \caption{Mean daily gain over the static-belief policy of the different information gates (whisker is the 95\% confidence interval).}
    \label{fig:info_ladder}
\end{figure}

\section{Solution Algorithms}
\label{app:algorithms}

This appendix provides the full pseudocode for the backward induction, Q-function estimation, and online execution procedures described in Section \ref{sec:algorithm}.

\begin{algorithm}[H]
\caption{Finite-Horizon Backward Induction (per day-of-week $d$)}
\label{alg:backward_induction}
\begin{algorithmic}[1]
\Require Regime params $\{(\theta_k, \sigma_k)\}_{k=1}^K$, baselines $\{\mu_{d,h}\}$, transition $\bvec{T}$, cost parameters, operating-hour sequence $\mathcal{H}_d = (h_1, \ldots, h_T)$ for each day-of-week $d$
\Ensure Day- and time-indexed policy $\pi(s, \bvec{b}, t, d)$ for $t = 1, \ldots, T$
\State Discretize backlog: $\mathcal{S} = \{0, 1, \ldots, s_{\max}\}$
\State Discretize belief simplex: $\mathcal{B} \subset \Delta^{K-1}$, regular grid with step $\Delta b = 0.05$
    \Comment{$|\mathcal{B}| = \binom{K - 1 + 1/\Delta b}{K - 1}$}
\State Discretize actions: $\mathcal{A} = \{a_{\min}, \ldots, a_{\max}\}$
\For{each day-of-week $d = 1, \ldots, 7$}
    \State Set terminal value: $V_{T+1}(s, \bvec{b};\, d) \gets -c_{\mathrm{lost}} \cdot s$ for all $(s, \bvec{b})$
    \For{$t = T, T-1, \ldots, 1$}
        \For{each $(s, \bvec{b}) \in \mathcal{S} \times \mathcal{B}$}
            \For{each $a \in \mathcal{A}$}
                \State $Q(s, \bvec{b}, a) \gets \Call{EstimateQ$_t$}{s, \bvec{b}, a, V_{t+1}(\cdot;\, d), d, h_t}$
            \EndFor
            \State $V_t(s, \bvec{b};\, d) \gets \max_{a \in \mathcal{A}}\; Q(s, \bvec{b}, a)$
            \State $\pi(s, \bvec{b}, t, d) \gets \arg\max_{a \in \mathcal{A}}\; Q(s, \bvec{b}, a)$
        \EndFor
    \EndFor
\EndFor
\end{algorithmic}
\end{algorithm}

\begin{algorithm}[H]
\caption{\textsc{EstimateQ$_t$}: Monte Carlo Q-value with known calendar context}
\label{alg:estimate_q_t}
\begin{algorithmic}[1]
\Procedure{EstimateQ$_t$}{$s, \bvec{b}, a, V, d, h_t$}
\State $Q \gets 0$
\For{each regime $k = 1, \ldots, K$}
    \State $R_k \gets 0$
    \For{$m = 1, \ldots, N_{\text{mc}}$}
        \State Sample $x \sim f_k(\cdot;\, d, h_t)$
            \Comment{Known $(d, h_t)$; no uniform sampling}
        \State $r \gets r(x, s, a)$ \Comment{Eq.~\ref{eq:reward_spillover}}
        \State $s' \gets \max\{x + s - v \cdot a,\; 0\}$
        \State $\bvec{b}' \gets \textsc{BayesUpdate}(\bvec{b},\, \bvec{T},\, x,\, d,\, h_t)$
            \Comment{Eq.~\ref{eq:belief_combined}}
        \State $R_k \mathrel{+}= r + V(s', \bvec{b}')$
            \Comment{No discount factor}
    \EndFor
    \State $Q \mathrel{+}= b(k) \cdot R_k / N_{\text{mc}}$
\EndFor
\State \Return $Q$
\EndProcedure
\end{algorithmic}
\end{algorithm}

Since demand varies by day of week, a separate backward induction is solved for each $d = 1, \ldots, 7$, yielding seven policies per store. The outer loop over $d$ is parallelizable.

After \textsc{BayesUpdate}, the updated belief $\bvec{b}'$ will generally not lie on a grid point. The value lookup $V(s', \bvec{b}')$ uses barycentric interpolation on the simplex. We identify the enclosing simplex cell in the regular grid and compute $V$ as a convex combination of the cell's $K$ vertex values. For a regular grid on $\Delta^{K-1}$, the enclosing cell and barycentric weights can be found in $O(K \log K)$ time.

\begin{algorithm}[H]
\caption{Online Execution of the POMDP Policy}
\label{alg:online}
\begin{algorithmic}[1]
\Require Policy $\pi(\cdot,\, \cdot,\, t, d)$, transition $\bvec{T}$, regime params $\{(\theta_k, \sigma_k)\}_{k=1}^K$, baselines $\{\mu_{d,h}\}$, stationary distribution $\bvec{\pi}$
\For{each operating day with day-of-week $d$}
    \State Initialize backlog $s_1 \gets 0$ (or carry over from previous day)
    \State Initialize belief $\bvec{b}_1 \gets \bvec{\pi}$ \Comment{Stationary distribution of $\bvec{T}$; Section~\ref{sec:transition}}
    \For{$t = 1, 2, \ldots, T$}
        \State \textbf{Decide:} $a_t \gets \pi(s_t, \bvec{b}_t, t, d)$
        \State \textbf{Observe:} demand $x_t$ realizes at hour $h_t$
        \State \textbf{Collect reward:} $r_t \gets r(x_t, s_t, a_t)$
        \State \textbf{Update backlog:} $s_{t+1} \gets \max\{x_t + s_t - v \cdot a_t,\; 0\}$
        \State \textbf{Correct belief:}
            $b_t^+(j) \gets \dfrac{b_t(j) \cdot f_j(x_t;\, d, h_t)}
                                  {\sum_{k} b_t(k) \cdot f_k(x_t;\, d, h_t)}$
            for all $j$ \Comment{Eq.~\ref{eq:correct}}
        \State \textbf{Predict belief:}
            $\bvec{b}_{t+1} \gets \bvec{T}^\top \bvec{b}_t^+$
            
    \EndFor
\EndFor
\end{algorithmic}
\end{algorithm}

The initial belief $\bvec{b}_1 = \bvec{\pi}$ is the stationary distribution of $\bvec{T}$, computed as the normalized left eigenvector corresponding to eigenvalue 1. This is the principled uninformative prior for the first operating hour of a session.

\end{document}